\newtheorem{theorem}{Theorem}[section]
\newtheorem{definition}[theorem]{Definition}
\newtheorem{proposition}[theorem]{Proposition}
\theoremstyle{definition}
\newtheorem{remark}[theorem]{Remark}
\newcommand{\RR}{\mathbb{R}}
\newcommand{\ds}{\displaystyle}
\begin{document}

\title{{\bf On the minimality of Keplerian arcs with fixed negative energy\thanks{Work supported by the ERC Advanced Grant 2013 n.~339958 \textit{Complex Patterns for Strongly Interacting Dynamical Systems - COMPAT}. 
Work written under the auspices of the Grup\-po Na\-zio\-na\-le per l'Anali\-si Ma\-te\-ma\-ti\-ca, la Pro\-ba\-bi\-li\-t\`{a} e le lo\-ro Appli\-ca\-zio\-ni (GNAMPA) of the Isti\-tu\-to Na\-zio\-na\-le di Al\-ta Ma\-te\-ma\-ti\-ca (INdAM). In particular, A.B. acknowledges the support of the INdAM-GNAMPA Project ``Il modello di Born-Infeld per l'elettromagnetismo
nonlineare: esistenza, regolarit\`{a} e molteplicit\`{a} di soluzioni''.}}}

\author{
\vspace{1mm}\\
{\bf\large Vivina Barutello, Alberto Boscaggin, Walter Dambrosio}
\vspace{1mm}\\
{\it\small Dipartimento di Matematica, Universit\`a di Torino}\\
{\it\small via Carlo Alberto 10}, {\it\small 10123 Torino, Italy}\\
{\it\small e-mails: vivina.barutello@unito.it, alberto.boscaggin@unito.it, walter.dambrosio@unito.it}\vspace{1mm}}

\date{}

\maketitle

\centerline{\textit{\large{In memory of Florin Diacu}}}

\vspace{0.7cm}

%\subjclass{37J45, 70B05, 70F15}

%\keywords{}

\begin{abstract}

We revisit a classical result by Jacobi \cite{Ja} on the local minimality, as critical points of the corresponding energy functional,
of fixed-energy solutions of the Kepler equation joining two distinct points with the same distance from the origin.
Our proof relies on the Morse index theorem, together with a characterization of the conjugate points as points of geodesic bifurcation.
\end{abstract}

%\blfootnote{\textit{AMS Subject Classification: Primary: , Secondary:  }}
%\blfootnote{\textit{Keywords:} }

\section{Introduction}

Variational methods have proved to be a powerful tool in constructing solutions to various equations of Celestial Mechanics 
(see, among many others, \cite{BahRab89,BarTerVer14,BosDamTer17,Che,CheMon,FerTer,FuGrNe,MadVen09,SoaTer12,TeVe,Yu} and the references therein). Within this approach, as a matter of fact, a key role is played by the Kepler equation
\begin{equation}\label{eq_kep}
\ddot q(t) = - \frac{q(t)}{\vert q(t) \vert^3}, \qquad q \in \mathbb{R}^2 \setminus \{0\},
\end{equation}
to which one is typically led, via blow-up analysis or comparison arguments. Therefore, a solid knowledge of the variational properties of Keplerian orbits is often necessary.

In this paper, we are concerned with Keplerian orbits with energy $h < 0$ joining two points with the same distance from the origin. Precisely, we deal with the fixed-energy problem
\begin{equation} \label{eq-main1}
		\begin{cases}
		\ds \ddot{q}(t) = - \frac{q(t)}{|q(t)|^3}, \quad \forall t\in (-\omega',\omega'), \vspace{0.2cm}\\
        \ds q(-\omega') = p',\ q(\omega') = q',
				\vspace{0.2cm}\\
		\ds \dfrac{1}{2} |\dot{q}(t)|^2- \frac{1}{|q(t)|} = h, \quad \forall t\in [-\omega',\omega'],
	\end{cases}
	\end{equation}
where	$p', q' \in \RR^2 \setminus \{0\}$, with $|p'|=|q'|=R'$ and $p'\neq q'$. The energy $h < 0$ is fixed ($\omega'>0$ is unprescribed).
Notice that solutions of this problem (if any) have trajectories which lie on Keplerian ellipses (with energy $h$): in particular, they always appear in pairs, since any ellipse gives rise to two distinct arcs (one with positive angular momentum, one with negative angular momentum).
Preliminary,  in Section \ref{sec2} we recover existence, multiplicity and mutual positions of these Keplerian arcs, through a completely elementary geometrical analysis. 

From a variational point of view, solutions of problem \eqref{eq-main1}
can be interpreted as geodesic arcs in the punctured plane, when endowed with the Riemannian metric
$g(q) = (1/\vert q \vert+h)\langle \cdot, \cdot \rangle$ where $\langle \cdot, \cdot \rangle$
is the Euclidean product in $\mathbb{R}^2$. This is the content of the well known Maupertuis-Jacobi principle: solutions to \eqref{eq-main1}   correspond - after a suitable reparameterization - to critical points of the so-called energy functional
$$
E_h(q) = \int_0^1 \vert \dot q(t) \vert^2 \left( \frac{1}{\vert q(t) \vert} + h\right)\,dt,
$$
(see Proposition \ref{prop_maup}).
It is therefore natural to investigate minimality properties of such Keplerian geodesics.
A first answer to this problem was already given by Jacobi in 1837 in the paper \cite{Ja}; later Todhunter translated Jacobi's work in \cite{Tod} (see section 226) and Winter synthesized it in his book \cite{Wint} (sections from 247 to 254). These authors developed a geometrical analysis based on localization of conjugate points on Keplerian geodesics (see also the more recent book by Gutzwiller \cite{Gut90}, section 2.5).

The aim of our paper is to revisit from a more modern perspective these classical results. In particular, a key role in our proof is played by a  characterization of conjugate points as points of geodesic bifurcation, according to the theory recently developed in \cite{PicPorTau04}. For the reader's convenience, a brief recap on these arguments is provided in the final Appendix. A precise statement of our main result is given in Section \ref{sec3} (see Theorem \ref{teo-var}), where a complete description of the minimality of Keplerian arcs, depending on the angular momentum and on their mutual position, is provided.
 
A complementary result for the case $h>0$ has been proposed recently by Montgomery in \cite{Mont}.

Let us finally mention that a whole Keplerian ellipse, when regarded as a closed geodesic for the corresponding energy functional, 
is not a local minimizer. This is in striking contrast with the celebrated paper \cite{Gor77} by Gordon, characterizing, for any given $T > 0$, the Keplerian ellipses of minimal period $T$ as minima of the action functional
$$
A_T(q) = \int_0^T \left( \frac12 \vert \dot q(t) \vert^2 + \frac{1}{\vert q(t) \vert}\right)\,dt
$$ 
on the set of non-contractible $H^1$-loops in the punctured plane (see, for instance, \cite{BaJaPo,HuSun10,KOP} and the references therein for some other contributions on this line of research).

%================================
\section{The existence and multiplicity result}\label{sec2}

We first observe that through the scaling
\[
x(t)=\vert h \vert q\left(\dfrac{t}{\vert h \vert^{3/2}}\right), \qquad t\in [-\vert h \vert^{3/2}\omega',\vert h \vert^{3/2}\omega'],
\] 
solutions of \eqref{eq-main1} correspond to solutions of 
\begin{equation} \label{eq-main}
\begin{cases}
\ds \ddot{x}(t) = - \frac{x(t)}{|x(t)|^3}, \quad \forall t\in (-\omega,\omega), \vspace{0.2cm}\\
\ds x(-\omega) = p,\ x(\omega)=q, \vspace{0.2cm} \\
\ds \dfrac{1}{2} |\dot{x}(t)|^2- \frac{1}{|x(t)|} = -1, \quad \forall t\in [-\omega,\omega],
\end{cases}
\end{equation}
where $p=\vert h \vert p'$ and $q=\vert h \vert q'$ (again, $\omega = \vert h \vert^{3/2}\omega'$ is unprescribed). 
Therefore, in the following we will focus on problem \eqref{eq-main}. 

Let us also notice that, due to the energy relation, trajectories of solutions of \eqref{eq-main} are confined in the set
\begin{equation}\label{eq_hill}
\mathcal{H} = \left\{ x \in \mathbb{R}^2 \setminus \{0\}: \, \vert x \vert < 1\right\},
\end{equation}
the so-called \textit{Hill's region}. On the other hand, as stated in Theorem \ref{teo-ex}, a crucial role is played by the ball $B_{1/2}$, centered at the origin and having radius $1/2$. Indeed, the qualitative description of solutions of \eqref{eq-main} changes according if the end-points belong to $B_{1/2}$ or not. 

In order to state the existence result, for every solution $x$ of \eqref{eq-main} we need to introduce the (conserved) angular momentum $x\wedge \dot{x}$; using polar coordinates centered at the origin of the plane of motion, $x=re^{i\phi}$ (with $r > 0$), it writes as
\[
c_x e^{i\phi}\wedge ie^{i\phi},
\]
where $c_x=r^2\dot{\phi}$. Non-collision orbits connecting $p$ and $q$ can be classified according to the sign of $c_x$;  to this end we introduce the sets
\[
{\cal C}^+=\{x\in C^2 ([-\omega,\omega];\RR^2\setminus \{0\}):\ c_x>0\},\quad {\cal C}^-=\{x\in C^2 ([-\omega,\omega];\RR^2\setminus \{0\}):\ c_x<0\}.
\]
Moreover, we denote by $2\phi_0\in (0,2\pi)$ the counterclockwise angle bewteen $0p$ and $0q$.

We then have the following result.

\begin{theorem} \label{teo-ex}
The following hold true:
\begin{enumerate}
\item[(i)] if $|p|=|q|<1/2$, then for every $\phi_0 \in (0,\pi)$ there exist four solutions $x_{\mathrm{int}}^\pm$ and $x_{\mathrm{ext}}^\pm$ of \eqref{eq-main} such that
\begin{itemize}
\item[-] $x_{\mathrm{int}}^\pm \in {\cal C}^\pm$ and $x_{\mathrm{ext}}^\pm \in {\cal C}^\pm$,
\item[-] $|x_{\mathrm{int}}^\pm (t)| < |p|$ and $|x_{\mathrm{ext}}^\pm (t)| > |p|$, for every $t\in (-\omega,\omega)$;
\end{itemize}
\item[(ii)] if $|p|=|q|=1/2$, then for every $\phi_0 \in (0,\pi) \setminus \{\pi/2\}$ there exist four solutions $x^\pm$, $x_{\mathrm{int}}$ and $x_{\mathrm{ext}}$ of \eqref{eq-main} such that
\begin{itemize}
\item[-] $x^\pm \in {\cal C}^\pm$; $x_{\mathrm{ext}}\in {\cal C}^+$, $x_{\mathrm{int}}\in {\cal C}^-$ if $\phi_0 \in (0,\pi/2)$,
$x_{\mathrm{ext}}\in {\cal C}^-$, $x_{\mathrm{int}}\in {\cal C}^+$ if $\phi_0 \in (\pi/2,\pi)$, 
\item[-] $|x^\pm(t)|=|p|$, $|x_{\mathrm{int}}(t)| < |p|$, $|x_{\mathrm{ext}}(t)| > |p|$, for every $t\in (-\omega,\omega)$;
\end{itemize}
if $\phi_0=\pi/2$ there exist two solutions $x^{\pm} \in {\cal C}^\pm$ such that $|x^\pm(t)|=|p|$ , for every $t\in (-\omega,\omega)$;
\item[(iii)] if $1/2<|p|=|q|<1$, then \eqref{eq-main} admits solutions only if $\phi_0 \in \left(0,\arcsin \frac{1-|p|}{|p|}\right) \cup 
\left(\pi-\arcsin \frac{1-|p|}{|p|},\pi\right)$. Moreover: if $\phi_0 \in\left(0,\arcsin \frac{1-|p|}{|p|}\right)$, there exist four solutions $x_{\mathrm{int},1}^-$, $x_{\mathrm{int},2}^-$, $x_{\mathrm{ext},1}^+$ and $x_{\mathrm{ext},2}^+$ of \eqref{eq-main} such that
\begin{itemize}
	\item[-] $x_{\mathrm{int},i}^- \in {\cal C}^-$ and $x_{\mathrm{ext},i}^+ \in {\cal C}^+$, $i=1, 2$,
	\item[-] $|x_{\mathrm{int},i}^- (t)| < |p|$ and $|x_{\mathrm{ext},i}^+ (t)| >|p|$, for every $t\in (-\omega,\omega)$, $i=1, 2$,
	\item[-] $\vert x_{\mathrm{ext},1}^+(0) \vert < \vert x_{\mathrm{ext},2}^+(0) \vert$ and $\vert x_{\mathrm{int},1}^-(0) \vert > \vert x_{\mathrm{int},2}^-(0) \vert$;
\end{itemize}
if $\phi_0 \in \left(\pi-\arcsin \frac{1-|p|}{|p|},\pi\right)$, there exist four solutions $x_{\mathrm{int},1}^+$, $x_{\mathrm{int},2}^+$, $x_{\mathrm{ext},1}^-$ and $x_{\mathrm{ext},2}^-$ of \eqref{eq-main} such that
\begin{itemize}
	\item[-] $x_{\mathrm{int},i}^+ \in {\cal C}^+$ and $x_{\mathrm{ext},i}^- \in {\cal C}^-$, $i=1, 2$,
	\item[-] $|x_{\mathrm{int},i}^+ (t)| < |p|$ and $|x_{\mathrm{ext},i}^- (t)| >|p|$, for every $t\in (-\omega,\omega)$, $i=1, 2$,
	\item[-] $\vert x_{\mathrm{ext},1}^-(0) \vert < \vert x_{\mathrm{ext},2}^-(0) \vert$ and $\vert x_{\mathrm{int},1}^+(0) \vert > \vert x_{\mathrm{int},2}^+(0) \vert$.
\end{itemize}
\end{enumerate}
\end{theorem}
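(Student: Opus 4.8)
The plan is to carry out exactly the ``elementary geometrical analysis'' announced before the statement, starting from the facts recalled in the Introduction: any solution of \eqref{eq-main} runs along a Keplerian ellipse of energy $-1$ with one focus at $O$, and, conversely, each such ellipse through $p$ and $q$ produces exactly two solutions, one in $\mathcal{C}^+$ and one in $\mathcal{C}^-$. Since $-1=-1/(2a)$, all these ellipses have semimajor axis $a=1/2$, so the first task is to list all ellipses with focus $O$, semimajor axis $1/2$, passing through both $p$ and $q$. Such an ellipse passes through $x$ iff $|x|+|x-F|=1$, $F$ being its second focus; imposing this for $x=p,q$ with $|p|=|q|=:R$ forces $|F-p|=|F-q|=1-R$, so $F$ lies on the perpendicular bisector $\ell$ of $pq$, which — as $|p|=|q|$ — passes through $O$. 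Choosing coordinates so that $\ell$ is the $x$-axis and $p=R(\cos\phi_0,-\sin\phi_0)$, $q=R(\cos\phi_0,\sin\phi_0)$ (so that the counterclockwise angle from $Op$ to $Oq$ is indeed $2\phi_0$), and writing $F=(t,0)$, the condition $|F-p|^2=(1-R)^2$ becomes
\[
t^2-2R\cos\phi_0\,t+(2R-1)=0 .
\]
Its discriminant is nonnegative iff $\sin\phi_0\le(1-R)/R$ — automatic for $R\le 1/2$, restrictive for $R>1/2$ — and $t_+t_-=2R-1$, $t_++t_-=2R\cos\phi_0$. Hence: for $R<1/2$ there are two real roots of opposite sign; for $R=1/2$ the roots are $0$ and $\cos\phi_0$, equal only when $\phi_0=\pi/2$; for $R>1/2$ real roots exist exactly for $\phi_0$ in the two stated intervals, and then share the sign of $\cos\phi_0$. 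One also checks $|t_\pm|<1$ (so the ellipses are non-degenerate and contained in $\mathcal{H}$), while $t=0$ gives the circle $\{|x|=1/2\}$, i.e. the energy-$(-1)$ circular orbit, which accounts for the solutions $x^\pm$ in (ii).

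Next I would record, for a fixed root $t$ and the corresponding ellipse $\mathcal{E}$, the shape of its two arcs. The major axis of $\mathcal{E}$ is $\ell$, with vertices at distances $\tfrac{1+t}{2}$ and $\tfrac{1-t}{2}$ from $O$; the arc from $p$ to $q$ run counterclockwise passes through the vertex at polar angle $0$, the one run clockwise through the vertex at polar angle $\pi$, and these are the $\mathcal{C}^+$- and $\mathcal{C}^-$-solutions attached to $\mathcal{E}$. Two elementary remarks complete the picture. (a) Using the polar equation $r=\tfrac{1-t^2}{2(1-t\cos\theta)}$ of $\mathcal{E}$ together with $t^2=2R\cos\phi_0\,t-(2R-1)$, one gets that $\{|x|=R\}$ meets $\mathcal{E}$ only at $\theta=\pm\phi_0$, i.e. only at $p$ and $q$; so each arc lies entirely in $\{|x|<R\}$ or entirely in $\{|x|>R\}$ according as the distance of its vertex is $<R$ or $>R$ — this is precisely $x_{\mathrm{int}}$ versus $x_{\mathrm{ext}}$ — and since $\tfrac{1+t}{2}\ge R\iff t\ge 2R-1$ and $\tfrac{1-t}{2}\ge R\iff t\le 1-2R$, the dichotomy reduces to comparing $t_\pm$ with $2R-1$. (b) Let $\sigma$ be the reflection in $\ell$: it swaps $p,q$ and preserves $\mathcal{E}$, and for a solution $x$ of \eqref{eq-main} the curve $t\mapsto\sigma(x(-t))$ is again such a solution, with the same endpoints and the same angular momentum, hence equals $x$; therefore $x(0)$ is $\sigma$-fixed, i.e. is the (unique) intersection of the arc with $\ell$, namely the vertex it passes through, so $|x(0)|\in\{\tfrac{1+t}{2},\tfrac{1-t}{2}\}$.

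It then remains to go through the three cases. For $R<1/2$: $t_-<0<t_+$, so $t_+>2R-1$ and $t_-<1-2R$; the ellipse with focus $(t_+,0)$ gives an exterior counterclockwise arc and an interior clockwise one, i.e. $x_{\mathrm{ext}}^+$ and $x_{\mathrm{int}}^-$, while the one with focus $(t_-,0)$ gives $x_{\mathrm{ext}}^-$ and $x_{\mathrm{int}}^+$ — this is (i). For $R=1/2$, $\phi_0\ne\pi/2$: the root $0$ gives $x^\pm$, and the root $\cos\phi_0$ gives a pair $x_{\mathrm{int}},x_{\mathrm{ext}}$ whose angular momenta are governed by $\sgn(\cos\phi_0)$, i.e. by $\phi_0\lessgtr\pi/2$; for $\phi_0=\pi/2$ only $x^\pm$ survive. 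For $R>1/2$ with $\phi_0$ in the first interval: both roots are positive, and since $t_+<1$ forces $t_-=(2R-1)/t_+>2R-1$, both exceed $2R-1$, so each ellipse yields an exterior counterclockwise arc and an interior clockwise one; as $t_+\ne t_-$, the values $\tfrac{1\pm t}{2}$ differ, and labelling index $1$ by the focus $(t_-,0)$ produces $x_{\mathrm{ext},i}^+$, $x_{\mathrm{int},i}^-$ with the asserted inequalities for $|x(0)|$. The second interval is handled identically, now with both roots negative (so the counterclockwise arc becomes the interior one), giving $x_{\mathrm{int},i}^+$, $x_{\mathrm{ext},i}^-$ with the stated orderings under the appropriate labelling. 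In every case the interior of each listed arc is strictly inside, resp. outside, $\{|x|=R\}$ by remark (a), and $\mathcal{E}\subset\mathcal{H}$, so all the pointwise relations in the statement hold.

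Everything here is routine plane geometry; the one point demanding care is the bookkeeping of the last paragraph — correctly pairing, on each ellipse, the two orientations with ``interior''/``exterior'' and with the value of $|x(0)|$ — together with the symmetry argument of remark (b) placing $x(0)$ at a vertex. I would also note that at the thresholds $\sin\phi_0=(1-R)/R$ the quadratic has a double root, hence a single ellipse, which is why the statement restricts to open intervals, and that orbits winding several times around an ellipse, although they solve \eqref{eq-main}, violate the strict inequalities above and are not among the listed solutions.
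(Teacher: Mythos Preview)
Your proof is correct and follows essentially the same geometric route as the paper: reduce by rotational invariance so that the second focus lies on the $x$-axis, solve a quadratic for its position (equivalently, for the eccentricity), and read off the four arcs case by case. The paper works directly with the polar forms $r=\tfrac{0.5(1-e^2)}{1\pm e\cos\phi}$ and solves for $e$, whereas you use the bifocal relation $|x|+|x-F|=1$ to get the quadratic $t^2-2R\cos\phi_0\,t+(2R-1)=0$; since $|t|=e$, the two computations coincide.

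Two small points where your write-up is actually a bit more complete than the paper's: your remark~(b), using the reflection $\sigma$ to show that $x(0)$ is the vertex on the relevant arc, is exactly what is needed to justify the orderings $|x_{\mathrm{ext},1}(0)|<|x_{\mathrm{ext},2}(0)|$ etc.\ in case~(iii), which the paper leaves implicit; and your remark~(a), showing that $\{|x|=R\}$ meets the ellipse only at $p,q$, gives a clean one-line justification of the strict interior/exterior inequalities that the paper obtains by bounding $\cos\phi(t)$. Your closing comment on the threshold $\sin\phi_0=(1-R)/R$ is also to the point.
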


\begin{remark}
As it will be clear from the proof, the solutions given in Theorem \ref{teo-ex} are the only self-intersection free solutions of \eqref{eq-main}. 
\end{remark}

A picture of the trajectories of the solutions to \eqref{eq-main} in the three cases is drawn in Figures \ref{fig-dentrocircolare}, \ref{fig-circolare} and \ref{fig-fuoricircolare}.

	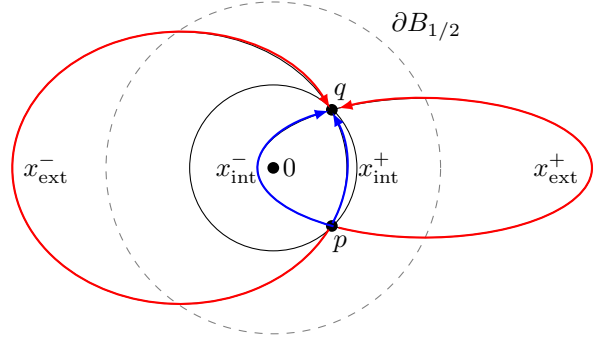
\begin{figure}[H]
	\centering
	\begin{tikzpicture}[scale=1.1]
	% circle
	\draw (0,0) circle (1cm);
	\draw[dashed,gray] (0,0) circle (2cm);
%	\draw[dashed,gray] (0,0) circle (4cm);
    % ellipse right	
	\draw (1.81,0) ellipse (2cm and 0.84cm);
	% ellipse left
	\draw (-1.12,0) ellipse (2cm and 1.64cm);
	\node[right] at (0,0) {$0$};
	% points
	\fill (0.7,0.7) circle[radius=2pt];
	\fill (0,0)  circle[radius=2pt];
	\fill (0.7,-0.7)  circle[radius=2pt];
	\node[below] at (0.8,-0.7) {$p$};
	\node[above] at (0.8,0.7) {$q$};
    % boundary names 
	\node[right] at (1.3,1.7) {$\partial B_{1/2}$};
 %   \node[right] at (1.8,3.7) {$\partial \mathcal{H}$};
	% arches names
	\node[right] at (-3.1,0) {$x_{\mathrm{ext}}^-$};
	\node[right] at (-0.8,0) {$x_{\mathrm{int}}^-$};
	\node[right] at (0.9,0) {$x_{\mathrm{int}}^+$};
	\node[right] at (3,0) {$x_{\mathrm{ext}}^+$};
	% arco interno c>0
	\coordinate (Q) at ($(-1.12,0) + (-24:2cm and 1.64cm)$);
	\draw[thick, blue,-latex] ($(-1.12,0) + (-24:2cm and 1.64cm)$(Q) arc (-24:24:2cm and 1.64cm);
	% arco esterno c<0
	\coordinate (QQ) at ($(-1.12,0) + (26:2cm and 1.64cm)$);
	\draw[thick, red,latex-] ($(-1.12,0) + (26:2cm and 1.64cm)$(QQ) arc (26:333:2cm and 1.64cm);
	% arco esterno c>0
	\coordinate (P) at ($(1.81,0) + (-121:2cm and 0.84cm)$);
	\draw[thick, red,-latex] ($(1.81,0) + (-121:2cm and 0.84cm)$(P) arc (-121:121:2cm and 0.84cm);
	% arco interno c<0
	%\coordinate (PP) at ($(1.81,0) + (125:2cm and 0.84cm)$);
	%
	\draw[thick, blue,latex-] ($(1.81,0) + (125:2cm and 0.84cm)$(P) arc (125:237:2cm and 0.84cm);
	\end{tikzpicture}
	\caption{The four trajectories in the case $|p|=|q|<1/2$, as described in Theorem \ref{teo-ex} (i): two of them, $x_{\mathrm{int}}^\pm$, lie inside the circle of radius $|p|$ and have opposite angular momentum; the other two, $x_{\mathrm{out}}^\pm$, still with opposite angular momentum, lie outside the circle of radius $|p|$. Notice that such trajectories exist for any choice of $p \in \partial B_{|p|}$.}
	\label{fig-dentrocircolare}
\end{figure}

	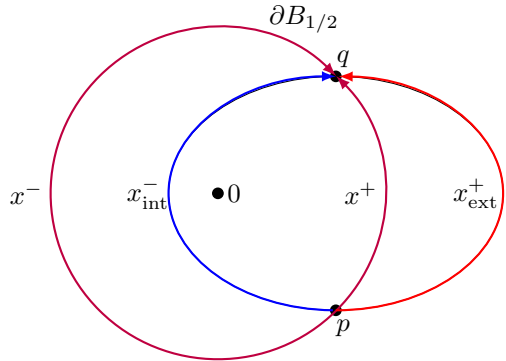
\begin{figure}[H]
	\centering
	\begin{tikzpicture}[scale=1.1]
	% circle
%	\draw[dashed,gray] (0,0) circle (4cm);
	% ellipses
	\draw (1.41,0) ellipse (2cm and 1.41cm);
	\node[right] at (0,0) {$0$};	
	% points
	\fill (1.41,1.41) circle[radius=2pt];
	\fill (0,0)  circle[radius=2pt];
	\fill (1.41,-1.41)  circle[radius=2pt];
	\node[below] at (1.5,-1.41) {$p$};
	\node[above] at (1.5,1.41) {$q$};
	% arches
	\node[right] at (-2.6,0) {$x^-$};
	\node[right] at (-1.2,0) {$x_{\mathrm{int}}^-$};
	\node[right] at (1.4,0) {$x^+$};
	\node[right] at (2.7,0) {$x_{\mathrm{ext}}^+$};
	% boundaries
	\node[right] at (0.5,2.1) {$\partial B_{1/2}$};
%	\node[right] at (1.8,3.7) {$\partial \mathcal{H}$};
	% arco interno c>0
	\coordinate (Q) at ($(0,0) + (-45:2cm and 2cm)$);
	\draw[thick,purple,-latex] ($(0,0) + (-45:2cm and 2cm)$(Q) arc (-45:45:2cm and 2cm);
	% arco esterno c<0
	%
	\draw[thick,purple,-latex] ($(0,0) + (-45:2cm and 2cm)$(Q) arc (-45:-315:2cm and 2cm);
	% arco esterno c>0
	\coordinate (P) at ($(1.41,0) + (-90:2cm and 1.41cm)$);
    \draw[thick, red,-latex]($(1.41,0) + (-90:2cm and 1.41cm)$(P) arc (-90:89:2cm and 1.41cm);
	% arco interno c<0
	%
	\draw[thick, blue,latex-] ($(1.41,0) + (90:2cm and 1.41cm)$(P) arc (90:269:2cm and 1.41cm);
	\end{tikzpicture}
	\caption{The four trajectories in the case $|p|=|q|=1/2$, as described in Theorem \ref{teo-ex} (ii): here one of the Keplerian ellipses is just the circumference $\partial B_{1/2}$, giving rise to the solutions $x^{\pm}$; the other two solutions are denoted by $x_{\mathrm{int}}^-$ and $x_{\mathrm{ext}}^+$, lying inside and outside the circle of radius $1/2$, respectively. Let us note that if $p$ and $q$ are antipodal, the picture degenerates and only two solutions exist: roughly, $x_{\mathrm{int}}^-=x^-$ and $x_{\mathrm{ext}}^+ = x^+$.}
	\label{fig-circolare}
\end{figure}

	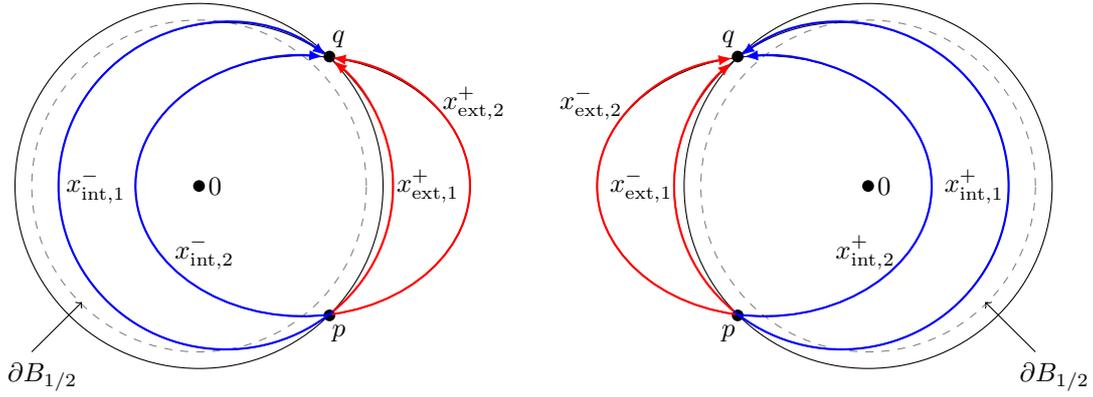
\begin{figure}[H]
	\centering
	\begin{tikzpicture}[scale=1.1]
	% circle
	% \draw[dashed] (0,0) circle (1cm);
	\draw[dashed,gray] (0,0) circle (2cm);
	\draw (0,0) circle (2.2cm);
	\draw[dashed,gray] (8,0) circle (2cm);
	\draw (8,0) circle (2.2cm);
	%
	%\draw[dashed,gray] (0,0) circle (4cm);
	% ellipses
	\draw (1.24,0) ellipse (2cm and 1.57cm);
	\draw (0.32,0) ellipse (2cm and 1.97cm);
	\draw (8-1.24,0) ellipse (2cm and 1.57cm);
	\draw (8-0.32,0) ellipse (2cm and 1.97cm);
	% points
	\fill (1.56,1.56) circle[radius=2pt];
	\fill (0,0)  circle[radius=2pt];
	\fill (1.56,-1.56)  circle[radius=2pt];
	\fill (8-1.56,1.56) circle[radius=2pt];
	\fill (8,0)  circle[radius=2pt];
	\fill (8-1.56,-1.56)  circle[radius=2pt];
	\node[right] at (0,0) {$0$};	
	\node[below] at (1.67,-1.56) {$p$};
	\node[above] at (1.67,1.56) {$q$};
	\node[right] at (8,0) {$0$};	
	\node[below] at (8-1.67,-1.56) {$p$};
	\node[above] at (8-1.67,1.56) {$q$};
	% boundaries
	\node[right] at (-2.4,-2.3) {$\partial B_{1/2}$};
	\node[right] at (9.7,-2.3) {$\partial B_{1/2}$};
	% \node[right] at (1.8,3.7) {$\partial \mathcal{H}$};
	% arrow
	\draw[->] (-2,-2)->(-1.4,-1.4);
	\draw[->] (10,-2)->(9.4,-1.4);
	% arches
	\node[right] at (-1.7,0) {$x_{\mathrm{int},1}^-$};
	\node[right] at (-0.4,-0.8) {$x_{\mathrm{int},2}^-$};
	\node[right] at (2.25,0) {$x_{\mathrm{ext},1}^+$};
	\node[right] at (2.8,1) {$x_{\mathrm{ext},2}^+$};
	\node[right] at (8.8,0) {$x_{\mathrm{int},1}^+$};
	\node[right] at (7.5,-0.8) {$x_{\mathrm{int},2}^+$};
	\node[right] at (4.8,0) {$x_{\mathrm{ext},1}^-$};
	\node[right] at (4.2,1) {$x_{\mathrm{ext},2}^-$};
	\coordinate (Q) at ($(1.24,0) + (-80:2cm and 1.57cm)$);
	% arco interno 2 c>0
	\draw[thick, red,-latex] ($(1.24,0) + (-80:2cm and 1.57cm)$(Q) arc (-80:80:2cm and 1.57cm);
	% arco interno 2 c<0
	\draw[thick, blue,-latex] ($(1.24,0) + (-80:2cm and 1.57cm)$(Q) arc (-80:-277:2cm and 1.57cm);
	% arco esterno 1 c>0
	\coordinate (P) at ($(0.32,0) + (-50:2cm and 1.97cm)$);
	\draw[thick, red,-latex] ($(0.32,0) + (-50:2cm and 1.97cm)$(P) arc (-50:50:2cm and 1.97cm);
	% arco interno 1 c<0
	\draw[thick, blue,-latex] ($(0.32,0) + (-50:2cm and 1.97cm)$(P) arc	(-50:-307:2cm and 1.97cm);
	\coordinate (QQ) at ($(8-1.24,0) + (-100:2cm and 1.57cm)$);
	\coordinate (PP) at ($(0.32,0) + (-50:2cm and 1.97cm)$);
	% arco esterno 2 c<0
	\draw[thick, red,-latex] ($(8-1.24,0) + (-100:2cm and 1.57cm)$(QQ) arc (-100:-259:2cm and 1.57cm);
	% arco esterno 1 c<0
	\draw[thick, red,-latex] ($(8-0.32,0) + (-130:2cm and 1.97cm)$(PP) arc (-130:-229:2cm and 1.97cm);
	% arco interno 2 c>0
	\draw[thick, blue,-latex] ($(8-1.24,0) + (-100:2cm and 1.57cm)$(QQ) arc (-100:97:2cm and 1.57cm);
	% arco esterno 1 c>0
	\draw[thick, blue,-latex] ($(8-0.32,0) + (-130:2cm and 1.97cm)$(PP) arc (-130:127:2cm and 1.97cm);
\end{tikzpicture}
	\caption{The four trajectories in the case $|p|=|q| \in (1/2,1)$, as described in Theorem \ref{teo-ex} (iii). Notice that, differently from the case (i), the two solutions lying outside the circle of radius $\vert p \vert$ have the same angular momentum; the same is true for the solutions lying inside the circle. In order to label them, we further need to distinguish the case $\phi_0 \in \left(0,\arcsin \frac{1-|p|}{|p|}\right)$ and $\phi_0 \in \left(\pi-\arcsin \frac{1-|p|}{|p|},\pi\right)$: in the first one (on the left), external (resp., internal) solutions have positive (resp., negative) angular momentum, in the second one (on the right), the picture is reversed. When $\phi_0 \in \left[\arcsin \frac{1-|p|}{|p|},\pi-\arcsin \frac{1-|p|}{|p|}\right]$, solutions do not exist at all.}
	\label{fig-fuoricircolare}
\end{figure}

The proof of Theorem \ref{teo-ex} relies on a purely geometrical argument, based on the well-known fact (see, for instance, \cite{Ge16, OrUr10}) that the trajectories of solutions of \eqref{eq-main} lie on ellipses with one focus at the origin and major axis of lenght $1$; using polar coordinates $(r,\phi)$, such kind of ellipses can be parametrized as
\begin{equation} \label{eq-ellpolari}
r=\dfrac{0.5(1-e^2)}{1+e\cos (\phi+\phi')},\qquad \phi \in [0,2\pi],
\end{equation}
where $e$ denotes the eccentricity of the ellipse and $(\cos \phi',\sin \phi')$ is the opposite of the direction of the vector connecting the origin and the second focus of the ellipse, see Figure \ref{figell}.

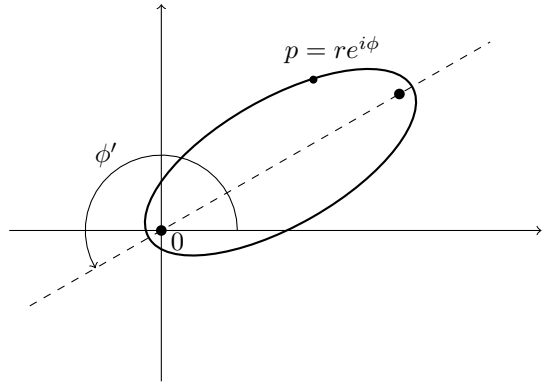
\begin{figure}[H]
	\centering
	\begin{tikzpicture}[scale=1]
	\draw[->] (-2,0) -- (5,0);
	\draw[->] (0,-2) -- (0,3);
	%\draw[rotate=0] (1.81,0) ellipse (2cm and 0.84cm);
	\draw[rotate=30,thick] (1.81,0) ellipse (2cm and 0.84cm);
	\draw[dashed] (-1.73,-1) -- (2.5*1.73,2.5*1);
	\draw[domain=0:210,->] plot ({cos(\x)},{sin(\x)});
	\node[right] at (-1,1) {$\phi'$};
	\fill (0,0)  circle[radius=2pt];
	\node[right] at (0,-0.15) {$0$};
	\fill (1.81*1.73,1.81)  circle[radius=2pt];
	\fill (2,2)  circle[radius=1.5pt];
	\node[right] at (1.5,2.4) {$p=r e^{i\phi}$};
	\end{tikzpicture}
	\caption{The polar equation of an ellipse as in formula \eqref{eq-ellpolari}: notice that $\phi'$ is the angle of the perihelion of the ellipse.}
	\label{figell}
\end{figure}

Due to rotational invariance of the problem, it is not restrictive to assume that the fixed end-points are symmetric with respect to the $x$-axis and that the starting point $p$ belongs to the lower half-plane; as a consequence the admissible values for $\phi'$ in \eqref{eq-ellpolari} are $\phi'=0$ or $\phi'=\pi$, corresponding to an ellipse with second focus on the negative or positive $x$-semiaxis, respectively.

Therefore, the admissible equations of the trajectories are 
\begin{equation} \label{eq-sxellpolari}
r=\dfrac{0.5(1-e^2)}{1+e\cos \phi},\qquad \phi \in [0,2\pi],
\end{equation}
and
\begin{equation} \label{eq-dxellpolari}
r=\dfrac{0.5(1-e^2)}{1-e\cos \phi},\qquad \phi \in [0,2\pi].
\end{equation}

With this in mind, we can give the proof of Theorem \ref{teo-ex}.

\begin{proof}[Proof of Theorem \ref{teo-ex}.]

We distinguish the three cases.
\smallbreak
\noindent
(i) Assume that $|p|=|q|<1/2$ and $\phi_0\in (0,\pi)$. According to the previous discussion, we first need to prove that there exist two ellipses of the form \eqref{eq-sxellpolari} or \eqref{eq-dxellpolari} connecting $p$ and $q$; indeed, we show that there exists one ellipse having equation of the type \eqref{eq-sxellpolari} and one of the type \eqref{eq-dxellpolari} (cf. again Figure \ref{fig-dentrocircolare}).

To this end, observing that, by the definition of $\phi_0$, the polar coordinates of $q$ are $q=(|q|,\phi_0)$, it is immediate to check that an ellipse of the form \eqref{eq-sxellpolari} passes from $q$ if and only if 
\begin{equation} \label{eq-eccelldentro1}
e=e_1=-|q|\cos \phi_0+\sqrt{|q|^2\cos^2 \phi_0+1-2|q|}
\end{equation}
while an ellipse of the form \eqref{eq-dxellpolari} passes from $q$ if and only if 
\begin{equation} \label{eq-eccelldentro2}
e=e_2=|q|\cos \phi_0+\sqrt{|q|^2\cos^2 \phi_0+1-2|q|}.
\end{equation}
We denote by ${\cal E}_1$ and ${\cal E}_2$ the ellipses of eccentricities \eqref{eq-eccelldentro1} and \eqref{eq-eccelldentro2}, respectively; it it clear that the points $p$ and $q$ divide each ellipse in two arcs, which are trajectories of solutions of \eqref{eq-main}. 

We conclude the proof in the case when $\phi_0\in (\pi/2,\pi)$, the other case being similar.
Let us denote by $x_{\mathrm{ext}}^-$ the arc of ${\cal E}_1$ joining $p$ and $q$ in the clockwise sense; $x_{\mathrm{ext}}^-$ is the image of a solution of \eqref{eq-main} which we still denote by $x_{\mathrm{ext}}^-$, with an abuse of notation. Analogously, let $x_{\mathrm{int}}^+$ be the arc of ${\cal E}_1$ joining $p$ and $q$ in the anticlockwise sense and let also $x_{\mathrm{int}}^+$ be the corresponding solution of \eqref{eq-main}. 

We claim that functions $x_{\mathrm{ext}}^-$ and $x_{\mathrm{int}}^+$ satisfy the desired properties.
Indeed, $x_{\mathrm{ext}}^-\in {\cal C}^-$ and $x_{\mathrm{int}}^+\in {\cal C}^+$, by construction; moreover, recalling \eqref{eq-sxellpolari}, $x_{\mathrm{ext}}^-$ can be parametrized as
\[
r(t)=\dfrac{0.5(1-e^2)}{1+e\cos \phi (t)},\qquad t\in [-\omega,\omega],
\]
with $\phi(t)\in [\phi_0,2\pi-\phi_0]$, for every $t\in [-\omega,\omega]$. Hence,
\[
\cos \phi(t)\leq \cos \phi_0,\quad \forall \ t\in [-\omega,\omega],
\]
and then
\[
|x_{\mathrm{ext}}^-(t)|=r(t)\geq \dfrac{0.5(1-e^2)}{1+e\cos \phi_0}=|p|,\quad \forall \ t\in [-\omega,\omega].
\]
In an analogous way, recalling \eqref{eq-dxellpolari}, $x_{\mathrm{int}}^+$ can be parametrized as
\[
r(t)=\dfrac{0.5(1-e^2)}{1+e\cos \phi (t)},\qquad t\in [-\omega,\omega],
\]
with $\phi(t)\in [-\phi_0,\phi_0]$, for every $t\in [-\omega,\omega]$. Hence,
\[
\cos \phi(t)\geq \cos \phi_0,\quad \forall \ t\in [-\omega,\omega],
\]
and then
\[
|x_{\mathrm{int}}^+(t)|=r(t)\leq \dfrac{0.5(1-e^2)}{1+e\cos \phi_0}=|p|,\quad \forall \ t\in [-\omega,\omega].
\]
Arguing in a similar way with the ellipse ${\cal E}_2$ it is possible to prove the existence of the other two solutions of \eqref{eq-main} with the stated properties.

\smallbreak
\noindent
(ii) Assume now that $|p|=|q|=1/2$ and $\phi_0\in (0,\pi)$. In this situation there exist the circular orbit $r=1/2$ and an elliptic orbit of the type
\begin{equation} \label{eq-ellissecirc1}
r=\dfrac{0.5(1-e^2)}{1-e\cos \phi},\qquad \phi \in [0,2\pi],
\end{equation}
with $e=\cos \phi_0$, if $\phi_0\in (0,\pi/2]$, and of the type
$$
r=\dfrac{0.5(1-e^2)}{1+e\cos \phi},\qquad \phi \in [0,2\pi],
$$
with $e=-\cos \phi_0$ if $\phi_0\in (\pi/2,\pi)$ (cf. Figure \ref{fig-circolare}). An argument similar to the one developed in the previous case leads to the conclusion.

\smallbreak
\noindent
(iii) Assume now that $1/2<|p|=|q|<1$; a simple geometrical argument shows that it is possible to join $p$ and $q$ with a Keplerian ellipse of major axis of lenght $1$ if and only if
\[
|p|+|p|\sin \phi_0 <1,
\]
i.e. 
\[
\sin \phi_0 <\dfrac{1-|p|}{|p|}.
\]
Let us remark that the right-side of the inequality is a quantity in $(0,1)$, hence we indeed have a restriction on $\phi_0$.
Under this condition, if $\phi_0\in (\pi-\arcsin\frac{1-|p|}{|p|},\pi)$ there exist two elliptic orbits of the type
$$
r=\dfrac{0.5(1-e^2)}{1+e\cos \phi},\qquad \phi \in [0,2\pi],
$$
with $e=-|p|\cos \phi_0\pm \sqrt{|p|^2\cos^2 \phi_0+1-2|p|}$.
On the other hand, if $\phi_0 \in (0,\arcsin \frac{1-|p|}{|p|})$ there exist two elliptic orbits of the type
\begin{equation} \label{eq-ellissefuoricirc2}
r=\dfrac{0.5(1-e^2)}{1-e\cos \phi},\qquad \phi \in [0,2\pi],
\end{equation}
with $e=|p|\cos \phi_0\pm \sqrt{|p|^2\cos^2 \phi_0+1-2|p|}$ (cf. Figure \ref{fig-fuoricircolare}).
The existence of the four solutions of \eqref{eq-main} in the two situations follows from the existence of the Keplerian ellipses arguing as in the first case of the proof.
\end{proof}

\section{The variational characterization}\label{sec3}

Let us first clarify the variational formulation of the fixed-energy problem \eqref{eq-main}.
Consider the Riemannian manifold $(\mathcal{H},g)$, where $\mathcal{H}$ is the Hill's region defined in \eqref{eq_hill} and
the Riemannian metric is given by 
$$
g(x)[u,v] = \left( \frac{1}{\vert x \vert} - 1\right)\langle u,v\rangle, \qquad x \in \mathcal{H}, \, u,v \in \mathbb{R}^2 ,
$$
where $\langle \cdot, \cdot \rangle$ denotes the Euclidean product on $\mathbb{R}^2$. The following proposition, which is a version of the well-known Maupertuis-Jacobi principle, holds true.

\begin{proposition}\label{prop_maup}
Let $x: [-\omega,\omega] \to \mathbb{R}^2$ be a solution of \eqref{eq-main}. Then, there exists a diffeomorphism $t: [0,1] \to [-\omega,\omega]$ such that the function
$$
\gamma(s) = x(t(s)), \qquad s \in [0,1],
$$
is a geodesic in the Riemannian manifold $(\mathcal{H},g)$.
\end{proposition}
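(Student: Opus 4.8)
The plan is to establish the statement by explicitly exhibiting the time change and then verifying directly that the reparametrized curve satisfies the geodesic equation of $g$; this is, of course, the Maupertuis--Jacobi principle for the fixed-energy Kepler problem, and the computation becomes elementary once the correct reparametrization has been identified.

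First I would record three consequences of \eqref{eq-main}. Since, as observed in Section~\ref{sec2} (cf.\ \eqref{eq_hill}), the trajectory of $x$ is confined to the Hill region $\mathcal{H}=\{0<|y|<1\}$, we have $x([-\omega,\omega])\subset\mathcal{H}$ and the conformal factor
\[
\rho(y):=\frac{1}{|y|}-1
\]
is strictly positive along $x$; thus $\gamma=x\circ t$ will indeed be a curve in the Riemannian manifold $(\mathcal{H},g)$. Moreover, the energy relation in \eqref{eq-main} reads exactly $\tfrac12|\dot x(t)|^2=\rho(x(t))>0$, so $\dot x$ never vanishes. Finally -- and this is what makes the argument go through -- a direct computation gives $\nabla\rho(y)=-y/|y|^3$, so that along $x$ the gradient of the conformal factor coincides with the right-hand side of the Kepler equation: $\nabla\rho(x(t))=\ddot x(t)$.

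Next I would write down the geodesic equation of the conformal metric $g=\rho\,\langle\cdot,\cdot\rangle$: computing the Christoffel symbols of a conformal metric in $\mathbb{R}^2$ (or quoting the standard formula), a curve $\gamma$ is a geodesic of $(\mathcal{H},g)$ if and only if
\[
\gamma''=-\frac{\langle\nabla\rho(\gamma),\gamma'\rangle}{\rho(\gamma)}\,\gamma'+\frac{|\gamma'|^2}{2\rho(\gamma)}\,\nabla\rho(\gamma),
\]
where $|\cdot|$ and $\langle\cdot,\cdot\rangle$ denote the Euclidean norm and product and $'=d/ds$. The reparametrization is then dictated by this equation: I set
\[
s(t)=\frac{1}{c}\int_{-\omega}^{t}\rho(x(\tau))\,d\tau,\qquad c:=\int_{-\omega}^{\omega}\rho(x(\tau))\,d\tau>0,
\]
which, $\rho(x(\cdot))$ being positive, is an increasing diffeomorphism (of class $C^2$ at least) of $[-\omega,\omega]$ onto $[0,1]$; let $t:[0,1]\to[-\omega,\omega]$ be its inverse. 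Writing $P(t):=\rho(x(t))$, by construction $P(t(s))\,t'(s)\equiv c$, and differentiating once gives the scalar identity $t''(s)=-\bigl(P'(t(s))/P(t(s))\bigr)\,t'(s)^2$.

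To conclude I would substitute $\gamma(s)=x(t(s))$ into the geodesic equation. Using $\gamma''=\ddot x\,(t')^2+\dot x\,t''$, the Kepler equation $\ddot x=\nabla\rho(x)$, the energy identity $|\dot x|^2=2\rho(x)$, and the consequent relation $\langle\nabla\rho(x),\dot x\rangle=\langle\ddot x,\dot x\rangle=\tfrac{d}{dt}\bigl(\tfrac12|\dot x|^2\bigr)=P'$, the right-hand side of the geodesic equation becomes exactly $\ddot x\,(t')^2-\bigl(P'/P\bigr)(t')^2\,\dot x$; hence, since $\dot x\neq0$, the geodesic equation reduces to the scalar identity for $t(s)$ obtained above, which holds. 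The same computation yields $g(\gamma')[\gamma',\gamma']=2P(t(s))^2t'(s)^2\equiv2c^2$, confirming that $\gamma$ is parametrized proportionally to $g$-arclength, as a geodesic should be. I expect the only genuinely delicate point to be the preliminary one -- that $\rho$ stays strictly positive along $x$, i.e.\ that the trajectory never reaches $\partial B_1$ and $\dot x$ never vanishes -- which is precisely where the negativity of the energy, through the confinement to the Hill region recalled in Section~\ref{sec2}, enters; the rest is bookkeeping. (Equivalently, the same content can be phrased variationally: $\gamma$ is a critical point, among curves with fixed endpoints, of the functional $\int_0^1|\gamma'|^2\bigl(1/|\gamma|-1\bigr)\,ds$, whose Euler--Lagrange equation is the geodesic equation of $g$.)
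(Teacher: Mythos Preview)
Your proof is correct and follows exactly the approach sketched in the paper: the reparametrization $s(t)=\frac{1}{c}\int_{-\omega}^{t}\rho(x(\tau))\,d\tau$ with $c=\int_{-\omega}^{\omega}\rho(x(\tau))\,d\tau$ is precisely the paper's change of variable (there denoted $S_x$), and you have simply carried out in full the ``straightforward computations'' that the paper omits. The only slip is notational: in the final line you write $g(\gamma')[\gamma',\gamma']$ where $g(\gamma)[\gamma',\gamma']$ is meant.
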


\begin{proof}[Sketch of the proof]
Defining, for $t \in [-\omega,\omega]$,
\begin{equation}\label{cambiost}
s(t) =  \frac{1}{S_x} \int_{-\omega}^t \left( \frac{1}{\vert x(\tau) \vert} - 1 \right)\,d\tau, \qquad \mbox{ where } S_x = \int_{-\omega}^\omega \left( \frac{1}{\vert x(\tau) \vert} - 1 \right)\,d\tau,
\end{equation}
and $t(s)$ as the functional inverse of $s(t)$, the conclusion follows from straightforward computations.
\end{proof}

\begin{remark}\label{rem_geoellisse}
The converse of Proposition \ref{prop_maup} also holds true: namely, any geodesic in $(\mathcal{H},g)$ admits a reparameterization
making it a solution of the Kepler equation with energy equal to $h=-1$. As a consequence, the images of geodesics in $(\mathcal{H},g)$ are arcs of ellipses with one focus at the origin and major axis of length $1$ (compare with \eqref{eq-ellpolari}).
\end{remark}

According to Proposition \ref{prop_energia}, a geodesic in the Riemannian manifold $(\mathcal{H},g)$, $\gamma$, is a critical point of the functional
$$
E(\gamma) = \int_0^1 \vert \dot \gamma(s) \vert^2 \left( \frac{1}{\vert \gamma(s) \vert} - 1\right)\,ds
$$
on the Hilbert manifold of $H^1$-paths $\gamma : [0,1] \to \mathcal{H}$ such that $\gamma(0) = p$ and $\gamma(1) = q$.
The local minimality/non-minimality of the corresponding $x$ as a solution of \eqref{eq-main}, as in Proposition \ref{prop_maup}, will thus be meant as the local minimality/non-miminality of the geodesic $\gamma$ for the energy functional $E$. 

With this in mind, our result reads as follows.

\begin{theorem} \label{teo-var}
With reference to Theorem \ref{teo-ex}, the following hold true:
\begin{enumerate}
\item if $|p|=|q|<1/2$, the solutions $x_{\mathrm{int}}^\pm$ are local minimizers while the solutions $x_{\mathrm{ext}}^\pm$ are not local minimizers;
\item if $|p|=|q|=1/2$, then:
\begin{itemize}
\item if $\phi_0 \in (0,\pi/2)$ the solutions $x^+$ and $x_{\mathrm{int}}$ are local minimizers while the solutions $x^-$ and $x_{\mathrm{ext}}$ are not local minimizers,
\item if $\phi_0 \in (\pi/2,\pi)$ the solutions $x^-$ and $x_{\mathrm{int}}$ are local minimizers while the solutions $x^+$ and $x_{\mathrm{ext}}$ are not local minimizers;
\end{itemize}
\item if $1/2<|p|=|q|<1$, then:
\begin{itemize}
\item if $\phi_0 \in\left(0,\arcsin \frac{1-|p|}{|p|}\right)$ the solutions $x_{\mathrm{ext},1}^+$ and $x_{\mathrm{int},2}^-$
are local minimizers while the solutions $x_{\mathrm{ext},2}^+$ and $x_{\mathrm{int},1}^-$ are not local minimizers,
\item if $\phi_0 \in \left(\pi-\arcsin \frac{1-|p|}{|p|},\pi\right)$ the solutions $x_{\mathrm{ext},1}^-$ and $x_{\mathrm{int},2}^+$
are local minimizers while the solutions $x_{\mathrm{ext},2}^-$ and $x_{\mathrm{int},1}^+$ are not local minimizers.
\end{itemize}
\end{enumerate}
\end{theorem}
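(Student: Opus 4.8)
The plan is to combine the Maupertuis--Jacobi reduction of Proposition~\ref{prop_maup} with the Morse index theorem, locating the conjugate points of the Keplerian geodesics by means of the geodesic--bifurcation characterization of \cite{PicPorTau04} recalled in the Appendix. More precisely, by Proposition~\ref{prop_maup} and the discussion above, a solution $x$ of \eqref{eq-main} is a local minimizer exactly when the associated geodesic $\gamma:[0,1]\to(\mathcal{H},g)$, with $\gamma(0)=p$ and $\gamma(1)=q$, is a local minimizer of $E$; by the Morse index theorem this holds --- and $\gamma$ is then even a strict local minimizer --- as soon as no point of $\gamma|_{(0,1]}$ is conjugate to $p$, whereas $\gamma$ fails to be a local minimizer whenever a conjugate point occurs in the open interval $(0,1)$. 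Thus the whole statement reduces to locating, along each of the finitely many arcs of Theorem~\ref{teo-ex}, the points conjugate to $p$.

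The heart of the argument is then to describe the conjugate locus of $p$. By \cite{PicPorTau04} these are the geodesic bifurcation points, i.e.\ the envelope of the one--parameter family of geodesics issuing from $p$; by Remark~\ref{rem_geoellisse} this is the family of ellipses with one focus at $0$ and major axis of length $1$ passing through $p$, equivalently (using $|x|+|x-f|=1$) the family of such ellipses whose second focus $f$ runs on the circle of centre $p$ and radius $1-|p|$. A point $X$ lies on one of these ellipses if and only if the circle of centre $X$ and radius $1-|X|$ meets that circle, and it lies on the envelope if and only if the two circles are tangent; external tangency gives $|X|+|X-p|=2-|p|$, internal tangency gives $X$ on the half--line $\{tp:t>0\}$. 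Hence the conjugate locus of $p$ is the union of $\{tp:t>0\}$ with the ellipse
$$
\Gamma_p=\bigl\{\,X\in\RR^2:\ |X|+|X-p|=2-|p|\,\bigr\},
$$
which has foci $0$ and $p$. Since every Keplerian ellipse $\mathcal{E}$ through $p$ meets $\{tp:t>0\}$ only at $p$, but --- being a member of the family enveloped by $\Gamma_p$ --- is tangent to $\Gamma_p$ at exactly one other point $P^\ast$, and since one checks that $P^\ast$ is precisely the second endpoint of the focal chord of $\mathcal{E}$ through $p$ (the chord joining $p$ to the second focus $f$), the unique point conjugate to $p$ along the full ellipse $\mathcal{E}$ is $P^\ast$. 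Consequently, the Keplerian arc from $p$ to $q$ lying on $\mathcal{E}$ is a (strict) local minimizer if and only if $P^\ast$ belongs to the complementary arc of $\mathcal{E}$.

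It then remains to decide, for each arc, on which of the two sub--arcs of $\mathcal{E}$ determined by $\{p,q\}$ the point $P^\ast$ lies. Writing $e$ for the eccentricity of $\mathcal{E}$ and $\ell=\tfrac12(1-e^2)$ for its semilatus rectum, the focal--chord identity $\tfrac{1}{|P^\ast-f|}+\tfrac{1}{|p-f|}=\tfrac{2}{\ell}$, together with $|P^\ast|=1-|P^\ast-f|$ and $|p|=1-|p-f|$, yields at once
$$
|P^\ast|\ge|p|,\qquad\text{with }|P^\ast|>|p|\iff e^2>2|p|-1 .
$$
From here: $P^\ast\neq q$ in all the regimes considered, since $q\in\Gamma_p$ would force $|q-p|=2(1-|p|)$, i.e.\ $\sin\phi_0=\tfrac{1-|p|}{|p|}$, impossible for $|p|<1/2$ and corresponding exactly to the values $\phi_0=\pi/2$, $\arcsin\tfrac{1-|p|}{|p|}$, $\pi-\arcsin\tfrac{1-|p|}{|p|}$ excluded in Theorem~\ref{teo-ex}. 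If $|p|=|q|\le1/2$ then $|P^\ast|>|p|$ always (for the circle in case (ii), $e=0$ gives $P^\ast=-p$), so $P^\ast$ is interior to the sub--arc of $\mathcal{E}$ lying outside $B_{|p|}$ --- that is, to the exterior arc ($x_{\mathrm{ext}}^\pm$, $x_{\mathrm{ext}}$) or to the longer circular arc ($x^-$ if $\phi_0<\pi/2$, $x^+$ if $\phi_0>\pi/2$); this gives items~1 and~2, since the interior/shorter arcs then carry no interior conjugate point. If $1/2<|p|=|q|<1$, the two Keplerian ellipses of Theorem~\ref{teo-ex}(iii) have eccentricities $e_\pm$ with $e_+e_-=2|p|-1$, hence exactly one of them satisfies $e^2>2|p|-1$: on that one, $P^\ast$ lies on the exterior arc (which is thus not a minimizer) and the interior arc is a minimizer, while on the other ellipse the roles are reversed; matching this with the orderings of $|x_{\mathrm{ext},i}^\pm(0)|$ and $|x_{\mathrm{int},i}^\pm(0)|$ in Theorem~\ref{teo-ex} gives item~3.

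The main obstacle is the second step: identifying the conjugate locus of $p$ as $\Gamma_p\cup\{tp:t>0\}$ and, above all, invoking the bifurcation theory of \cite{PicPorTau04} to ensure that \emph{every} conjugate point of $p$ arises in this way --- so that $P^\ast$ really is the only conjugate point along $\mathcal{E}$, and so that the radial half--line never contributes an interior conjugate point on the arcs at hand (which it does not, each Keplerian ellipse meeting $\{tp:t>0\}$ only at $p$). Once this reduction and the concrete description of $P^\ast$ via the focal chord are in place, the remaining work is only the elementary circle--tangency computation and the one--line algebra $|P^\ast|>|p|\iff e^2>2|p|-1$.
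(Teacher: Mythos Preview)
Your plan is correct and shares the paper's architecture exactly: reduce via Maupertuis--Jacobi to geodesics, invoke the Morse index theorem, use the equivalence ``conjugate $\Leftrightarrow$ bifurcation'' from \cite{PicPorTau04}, and conclude that the unique conjugate point to $p$ on the full Keplerian ellipse $\mathcal{E}$ is the focal antipode $P^\ast=p_f$ (the second intersection of $\mathcal{E}$ with the line through $p$ and the second focus $f$). Where you diverge is in the proof of this last fact and in the final case analysis. The paper proves that $p_f$ is the unique bifurcation point by brute force (its Proposition~3.4): it writes a generic nearby ellipse $\mathcal{E}_n$ through $p$, solves for the second intersection $p_n=\mathcal{E}\cap\mathcal{E}_n$, and shows via explicit Taylor expansions in the focus angle $\phi_n$ that $p_n\to p_f$, treating the circular case separately. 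You instead use the classical caustic picture: the second foci fill the circle $\{|f-p|=1-|p|\}$, external tangency of the two focus--circles gives the envelope $\Gamma_p=\{|X|+|X-p|=2-|p|\}$, and the reflection property shows $\mathcal{E}$ touches $\Gamma_p$ precisely at $p_f$. Likewise, the paper decides which arc contains $p_f$ by comparing $x_f$ with $|p|\cos\phi_0$ case by case, whereas you extract from the focal--chord identity the single criterion $|P^\ast|>|p|\iff e^2>2|p|-1$ (your displayed ``$|P^\ast|\ge|p|$'' should of course read as this equivalence, not as an unconditional inequality), and in case~(iii) the relation $e_+e_-=2|p|-1$ immediately singles out which ellipse carries $P^\ast$ on its exterior arc. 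Your route is shorter and more geometric, avoiding both the Taylor computations and the separate circular sub-case; the paper's explicit limit, on the other hand, verifies the bifurcation definition (Definition~4.4 here) directly, without relying on the envelope--caustic heuristic.
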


A visual explanation of Theorem \ref{teo-var} is given in the figure below: compared with Figures \ref{fig-dentrocircolare}, \ref{fig-circolare} and \ref{fig-fuoricircolare}, here the locally minimal solutions are painted in magenta color.

\begin{table}[H]
\centering
\begin{tabular}{c}
	\begin{tikzpicture}[scale=0.8]
	% circle
	\draw (0,0) circle (1cm);
	\draw[dashed,gray] (0,0) circle (2cm);
%	\draw[dashed,gray] (0,0) circle (4cm);
    % ellipse right	
	\draw (1.81,0) ellipse (2cm and 0.84cm);
	% ellipse left
	\draw (-1.12,0) ellipse (2cm and 1.64cm);
	\node[right] at (0,0) {$0$};
	% points
	\fill (0.7,0.7) circle[radius=2pt];
	\fill (0,0)  circle[radius=2pt];
	\fill (0.7,-0.7)  circle[radius=2pt];
	\node[below] at (0.8,-0.7) {$p$};
	\node[above] at (0.8,0.7) {$q$};
    % boundary names 
	\node[right] at (1.3,1.7) {$\partial B_{1/2}$};
 %   \node[right] at (1.8,3.7) {$\partial \mathcal{H}$};
	% arches names
	\node[right] at (-3.1,0) {$x_{\mathrm{ext}}^-$};
	\node[right] at (-0.8,0) {$x_{\mathrm{int}}^-$};
	\node[right] at (0.9,0) {$x_{\mathrm{int}}^+$};
	\node[right] at (3,0) {$x_{\mathrm{ext}}^+$};
	% arco interno c>0
	\coordinate (Q) at ($(-1.12,0) + (-24:2cm and 1.64cm)$);
	\draw[thick, magenta,-latex] ($(-1.12,0) + (-24:2cm and 1.64cm)$(Q) arc (-24:24:2cm and 1.64cm);
	% arco esterno c<0
	\coordinate (QQ) at ($(-1.12,0) + (26:2cm and 1.64cm)$);
	\draw[thick,gray,latex-] ($(-1.12,0) + (26:2cm and 1.64cm)$(QQ) arc (26:333:2cm and 1.64cm);
	% arco esterno c>0
	\coordinate (P) at ($(1.81,0) + (-121:2cm and 0.84cm)$);
	\draw[thick,gray,-latex] ($(1.81,0) + (-121:2cm and 0.84cm)$(P) arc (-121:121:2cm and 0.84cm);
	% arco interno c<0
	%\coordinate (PP) at ($(1.81,0) + (125:2cm and 0.84cm)$);
	%
	\draw[thick, magenta,latex-] ($(1.81,0) + (125:2cm and 0.84cm)$(P) arc (125:237:2cm and 0.84cm);
	\end{tikzpicture}
  \\
	\begin{tikzpicture}[scale=0.8]
	% circle
%	\draw[dashed,gray] (0,0) circle (4cm);
	% ellipses
	\draw (1.41,0) ellipse (2cm and 1.41cm);
	\node[right] at (0,0) {$0$};	
	% points
	\fill (1.41,1.41) circle[radius=2pt];
	\fill (0,0)  circle[radius=2pt];
	\fill (1.41,-1.41)  circle[radius=2pt];
	\node[below] at (1.5,-1.41) {$p$};
	\node[above] at (1.5,1.41) {$q$};
	% arches
	\node[right] at (-2.6,0) {$x^-$};
	\node[right] at (-1.2,0) {$x_{\mathrm{int}}^-$};
	\node[right] at (1.4,0) {$x^+$};
	\node[right] at (2.7,0) {$x_{\mathrm{ext}}^+$};
	% boundaries
	\node[right] at (0.5,2.1) {$\partial B_{1/2}$};
%	\node[right] at (1.8,3.7) {$\partial \mathcal{H}$};
	% arco interno c>0
	\coordinate (Q) at ($(0,0) + (-45:2cm and 2cm)$);
	\draw[thick,magenta,-latex] ($(0,0) + (-45:2cm and 2cm)$(Q) arc (-45:45:2cm and 2cm);
	% arco esterno c<0
	%
	\draw[thick,gray,-latex] ($(0,0) + (-45:2cm and 2cm)$(Q) arc (-45:-315:2cm and 2cm);
	% arco esterno c>0
	\coordinate (P) at ($(1.41,0) + (-90:2cm and 1.41cm)$);
    \draw[thick, gray,-latex]($(1.41,0) + (-90:2cm and 1.41cm)$(P) arc (-90:89:2cm and 1.41cm);
	% arco interno c<0
	%
	\draw[thick, magenta,latex-] ($(1.41,0) + (90:2cm and 1.41cm)$(P) arc (90:269:2cm and 1.41cm);
	\end{tikzpicture}
	\\
	\begin{tikzpicture}[scale=0.8]
	% circle
	% \draw[dashed] (0,0) circle (1cm);
	\draw[dashed,gray] (0,0) circle (2cm);
	\draw (0,0) circle (2.2cm);
	\draw[dashed,gray] (8,0) circle (2cm);
	\draw (8,0) circle (2.2cm);
	%
	%\draw[dashed,gray] (0,0) circle (4cm);
	% ellipses
	\draw (1.24,0) ellipse (2cm and 1.57cm);
	\draw (0.32,0) ellipse (2cm and 1.97cm);
	\draw (8-1.24,0) ellipse (2cm and 1.57cm);
	\draw (8-0.32,0) ellipse (2cm and 1.97cm);
	% points
	\fill (1.56,1.56) circle[radius=2pt];
	\fill (0,0)  circle[radius=2pt];
	\fill (1.56,-1.56)  circle[radius=2pt];
	\fill (8-1.56,1.56) circle[radius=2pt];
	\fill (8,0)  circle[radius=2pt];
	\fill (8-1.56,-1.56)  circle[radius=2pt];
	\node[right] at (0,0) {$0$};	
	\node[below] at (1.67,-1.56) {$p$};
	\node[above] at (1.67,1.56) {$q$};
	\node[right] at (8,0) {$0$};	
	\node[below] at (8-1.67,-1.56) {$p$};
	\node[above] at (8-1.67,1.56) {$q$};
	% boundaries
	\node[right] at (-2.4,-2.3) {$\partial B_{1/2}$};
	\node[right] at (9.7,-2.3) {$\partial B_{1/2}$};
	% \node[right] at (1.8,3.7) {$\partial \mathcal{H}$};
	% arrow
	\draw[->] (-2,-2)->(-1.4,-1.4);
	\draw[->] (10,-2)->(9.4,-1.4);
	% arches
	\node[right] at (-1.7,0) {$x_{\mathrm{int},1}^-$};
	\node[right] at (-0.4,-0.8) {$x_{\mathrm{int},2}^-$};
	\node[right] at (2.25,0) {$x_{\mathrm{ext},1}^+$};
	\node[right] at (2.8,1) {$x_{\mathrm{ext},2}^+$};
	\node[right] at (8.8,0) {$x_{\mathrm{int},1}^+$};
	\node[right] at (7.5,-0.8) {$x_{\mathrm{int},2}^+$};
	\node[right] at (4.8,0) {$x_{\mathrm{ext},1}^-$};
	\node[right] at (4.2,1) {$x_{\mathrm{ext},2}^-$};
	\coordinate (Q) at ($(1.24,0) + (-80:2cm and 1.57cm)$);
	% arco interno 2 c>0
	\draw[thick, gray,-latex] ($(1.24,0) + (-80:2cm and 1.57cm)$(Q) arc (-80:80:2cm and 1.57cm);
	% arco interno 2 c<0
	\draw[thick, magenta,-latex] ($(1.24,0) + (-80:2cm and 1.57cm)$(Q) arc (-80:-277:2cm and 1.57cm);
	% arco esterno 1 c>0
	\coordinate (P) at ($(0.32,0) + (-50:2cm and 1.97cm)$);
	\draw[thick, magenta,-latex] ($(0.32,0) + (-50:2cm and 1.97cm)$(P) arc (-50:50:2cm and 1.97cm);
	% arco interno 1 c<0
	\draw[thick, gray,-latex] ($(0.32,0) + (-50:2cm and 1.97cm)$(P) arc	(-50:-307:2cm and 1.97cm);
	\coordinate (QQ) at ($(8-1.24,0) + (-100:2cm and 1.57cm)$);
	\coordinate (PP) at ($(0.32,0) + (-50:2cm and 1.97cm)$);
	% arco esterno 2 c<0
	\draw[thick, gray,-latex] ($(8-1.24,0) + (-100:2cm and 1.57cm)$(QQ) arc (-100:-259:2cm and 1.57cm);
	% arco esterno 1 c<0
	\draw[thick, magenta,-latex] ($(8-0.32,0) + (-130:2cm and 1.97cm)$(PP) arc (-130:-229:2cm and 1.97cm);
	% arco interno 2 c>0
	\draw[thick, magenta,-latex] ($(8-1.24,0) + (-100:2cm and 1.57cm)$(QQ) arc (-100:97:2cm and 1.57cm);
	% arco esterno 1 c>0
	\draw[thick, gray,-latex] ($(8-0.32,0) + (-130:2cm and 1.97cm)$(PP) arc (-130:127:2cm and 1.97cm);
\end{tikzpicture}
\end{tabular}
%\caption{A graphical explanation of Theorem \ref{teo-var}: }
%\label{figurateovar}
\end{table}

\begin{remark}\label{rem_casodegenere}
Notice that the case $\vert p \vert = \vert q \vert = 1/2$ and $\phi_0 = \pi/2$, giving rise to two semi-circular orbits, is left out from Theorem \ref{teo-var}. Indeed, the arguments used in the proof of Theorem \ref{teo-var} show that both these solutions are degenerate: $q$ is conjugate to $p$ along the corresponding geodesic. Hence, they have zero Morse index, but we cannot decide if they are local minimizers or not.   
\end{remark}

The remaining part of this section is devoted to the proof of Theorem \ref{teo-var}. For this, we will make use of the theory explained in the Appendix, based on geodesic bifurcation and the Morse index theorem.

\begin{remark}
It is worth mentioning that a slightly different variational formulation of the fixed-energy problem \eqref{eq-main} is possible.
Precisely, again up to a reparameterization ($\eta(s) = x(\tau(s))$), solutions to \eqref{eq-main} correspond to critical points of the Maupertuis-Jacobi functional
$$
J(\eta) = \int_0^1 \vert \dot \eta(s) \vert^2 \,ds \int_0^1 \left( \frac{1}{\vert \eta(s) \vert} - 1\right)\,ds,
$$
see for instance \cite{AmbCot93}. This variational formulation has the advantage that the parameterization is made through an affine change of variables (differently from \eqref{cambiost}, which is nonlinear): hence, the functional $J$ has a clearer cinematical interpretation with respect to the energy functional $E$.
Also, when adopting tools of Nonlinear Analysis in order to prove the existence of critical points, using $J$ is often simpler.
On the other hand, the energy functional $E$ has a neater geometrical meaning, since it is directly connected with the theory of geodesics in Riemannian manifolds: for this reason,
we have chosen to deal with it. It is not difficult, however, to see that our result can be rephrased in terms of the Maupertuis-Jacobi functional $J$.  
\end{remark}

\subsection{Antipodal points on ellipses}

In this section we describe a preliminary geometrical property of ellipses which will play a key role in our arguments.

\begin{definition}\label{def_antipodali}
Let $\mathcal{E} \subset \mathbb{R}^2$ be an ellipse with one focus at the origin and let $f$ be the other focus of $\mathcal{E}$.
Given $p \in \mathcal{E}$, we call antipodal point of $p$ the intersection point $p_f$ between the ellipse $\mathcal{E}$ and the straight line joining $p$ and $f$.  
\end{definition}

Note that, if $\mathcal{E}$ is a circumference, the point $p_f$ is simply the symmetric of $p$ with respect to the origin.

The result which we are going to prove is the following. 

\begin{proposition}\label{prop_antipodali}
Let $\mathcal{E} \subset \mathbb{R}^2$ be an ellipse with foci $0$ and $f = (x_f,0)$ and major axis of length $1$ and let $p \in \mathcal{E}$.
Let $\{\mathcal{E}_n\}_n \subset \mathbb{R}^2$ be a sequence of ellipses, with $\mathcal{E}_n \neq \mathcal{E}$ for any $n$, satisfying the following conditions:
\begin{itemize}
\item[i)] one focus of $\mathcal{E}_n$ is at the origin and the major axis has length $1$,
\item[ii)] $p \in \mathcal{E}_n$,
\item[iii)] the other focus $f_n$ of $\mathcal{E}_n$ converges to $f$, as $n \to +\infty$.
\end{itemize}
Then, for any $n$, the ellipses $\mathcal{E}_n$ and $\mathcal{E}$ intersect in a unique point $p_n$ different from $p$, and $p_n \to p_f$ as 
$n \to +\infty$.
\end{proposition}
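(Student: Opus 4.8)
The plan is to translate the intersection problem into a question about a line meeting a fixed ellipse, to rule out tangency by means of the optical (reflection) property of the ellipse, and then to obtain the limit by a continuity argument.

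First I would recall that an ellipse with foci $0$ and $w$ and major axis of length $1$ coincides with the set $\{x\in\mathbb{R}^2:\ |x|+|x-w|=1\}$. Since $\mathcal{E}_n\neq\mathcal{E}$ while both ellipses have a focus at the origin and major axis $1$, their second foci are distinct, $f_n\neq f$; hence the perpendicular bisector $L_n$ of the segment $[f,f_n]$ is a well-defined straight line, containing the midpoint $m_n=(f+f_n)/2$. Subtracting the two defining relations, one sees that for $x\in\mathbb{R}^2$ one has $x\in\mathcal{E}\cap\mathcal{E}_n$ if and only if $x\in\mathcal{E}$ and $|x-f|=|x-f_n|$; therefore $\mathcal{E}\cap\mathcal{E}_n=\mathcal{E}\cap L_n$, and in particular $p\in L_n$, since $p\in\mathcal{E}\cap\mathcal{E}_n$.

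The key step is to prove that $L_n$ is not tangent to $\mathcal{E}$: as $L_n$ meets $\mathcal{E}$ (at $p$), it will then be a secant, so that $\mathcal{E}\cap L_n=\mathcal{E}\cap\mathcal{E}_n$ consists of exactly two points, $p$ and a unique $p_n\neq p$, which is the first assertion. To this end I would invoke the classical reflection property of the ellipse: the reflection of a focus across any tangent line lies on the circle centered at the other focus with radius equal to the length of the major axis, here the circle of radius $1$ centered at the origin. On the other hand, the reflection of $f$ across the perpendicular bisector $L_n$ is, by construction, exactly $f_n$. Hence, were $L_n$ tangent to $\mathcal{E}$, we would get $|f_n|=1$; but, as one focus of $\mathcal{E}_n$ is the origin, $|f_n|$ is the distance between the two foci of $\mathcal{E}_n$, which equals its eccentricity (the major axis having length $1$) and is therefore strictly less than $1$ --- a contradiction. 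I expect this to be the main obstacle: the more naive route of checking that $m_n$ lies in the interior of $\mathcal{E}$ fails in general (a triangle inequality estimate is not enough), and it is the optical property that makes the argument work for every $n$.

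For the convergence $p_n\to p_f$, note that $p\in\mathcal{E}$ while $f$ belongs to the interior of the region bounded by $\mathcal{E}$, so $p\neq f$, the line through $p$ and $f$ is a secant of $\mathcal{E}$, and its second intersection point with $\mathcal{E}$ is, by definition, $p_f$. As $f_n\to f$ we get $m_n\to f$, hence for $n$ large $m_n\neq p$ and $L_n$ is the line through $p$ with unit direction $v_n=(m_n-p)/|m_n-p|\to v_\infty:=(f-p)/|f-p|$. Writing $\mathcal{E}=\{x:\ Q(x)=0\}$ with $Q(x)=\langle Ax,x\rangle+\langle b,x\rangle+c$ and $A$ positive definite, the polynomial $t\mapsto Q(p+tv)$ has vanishing constant term (because $p\in\mathcal{E}$) and leading coefficient $\langle Av,v\rangle>0$, so its other root $t(v)=-(\langle b,v\rangle+2\langle Ap,v\rangle)/\langle Av,v\rangle$ is a continuous function of $v$, and $\Phi(v):=p+t(v)v$ is the point of $\mathcal{E}\cap(p+\mathbb{R}v)$ different from $p$ as soon as $t(v)\neq0$. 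Since the line through $p$ and $f$ is a secant, $t(v_\infty)\neq0$ and $\Phi(v_\infty)=p_f$; since $p_n\in\mathcal{E}\cap L_n$, $p_n\neq p$, and $L_n$ is not tangent to $\mathcal{E}$ at $p$, we must have $p_n=\Phi(v_n)$. Continuity then yields $p_n=\Phi(v_n)\to\Phi(v_\infty)=p_f$, which completes the proof.
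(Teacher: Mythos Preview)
Your proof is correct, and it takes a genuinely different---and considerably more geometric---route than the paper's. The paper works in polar coordinates: it writes the equations of $\mathcal{E}$ and $\mathcal{E}_n$ as in \eqref{eqellpartenza}--\eqref{eqelln1}, solves the resulting system to obtain explicit expressions for $\cos\psi_n$ and $\sin\psi_n$ in terms of auxiliary quantities $A_n,B_n,C_n$, Taylor-expands these in $\phi_n$, computes the limit point $p_*$ in closed form, and finally checks $p_*=p_f$ by comparing the slopes of the lines $pf$ and $p_*f$. The circular case $\mathcal{E}=\partial B_{1/2}$ is treated separately.

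Your argument replaces all of this by two observations: (a) subtracting the focal-sum relations shows $\mathcal{E}\cap\mathcal{E}_n=\mathcal{E}\cap L_n$ with $L_n$ the perpendicular bisector of $[f,f_n]$; (b) the optical property of the ellipse rules out tangency, because the reflection of $f$ across $L_n$ is $f_n$, and $|f_n|$ is the eccentricity of $\mathcal{E}_n$, hence $<1$. The convergence $p_n\to p_f$ then follows by continuity of the ``second root'' map $v\mapsto \Phi(v)$ once one notes that $L_n$ passes through both $p$ and the midpoint $m_n\to f$, so its direction converges to that of the chord $pf$. This approach is shorter, coordinate-free, and handles the circular case uniformly (when $f=0$ one still has $m_n=f_n/2\to 0$ and the limiting secant is the line through $p$ and the origin). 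What the paper's computation buys, on the other hand, is an explicit formula for $p_*$ in terms of $r_0,\phi_0,e$, which may be of independent use; your argument establishes the limit without ever writing it down.
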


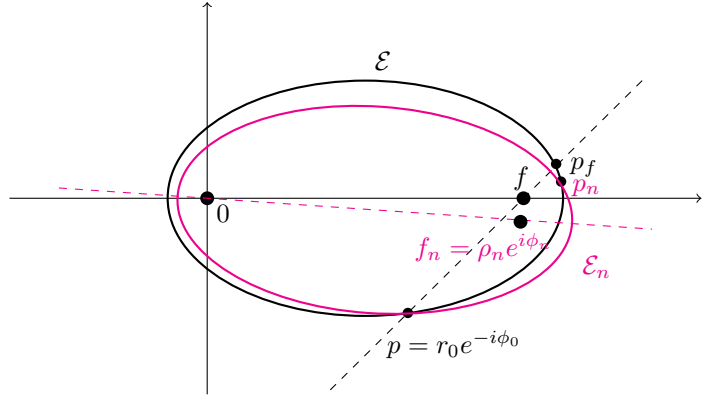
\begin{figure}[H]
	\centering
	\begin{tikzpicture}[scale=1.3]
	\draw[->] (-2,0) -- (5,0);
	\draw[->] (0,-2) -- (0,2);
	%\draw[rotate=0] (1.81,0) ellipse (2cm and 0.84cm);
	\draw[thick] (1.6,0) ellipse (2cm and 1.2cm);
	\node[right] at (1.6,1.4) {$\mathcal{E}$};
	%\draw[rotate=30,thick] (1.81,0) ellipse (2cm and 0.84cm);
	%\draw[dashed] (-1.73,-1) -- (2.5*1.73,2.5*1);
	%\draw[domain=0:210,->] plot ({cos(\x)},{sin(\x)});
	%\node[right] at (-1,1) {$\phi'$};
	\fill (0,0)  circle[radius=2pt];
	\node[right] at (0,-0.15) {$0$};
	\fill (3.2,0)  circle[radius=2pt];
	\node[right] at (3,0.2) {$f$};
	\fill (3.53,0.35)  circle[radius=1.5pt];
	\node[right] at (3.58,0.32) {$p_f$};
	\fill (3.58,0.17)  circle[radius=1.5pt];
	\node[right,magenta] at (3.6,0.11) {$p_n$};
	\fill (2.03,-1.17)  circle[radius=1.5pt];
	\node[below] at (2.5,-1.25) {$p=r_0e^{-i\phi_0}$};
	\node[right,magenta] at (3.7,-0.7) {$\mathcal{E}_n$};
	\draw[dashed] (4.4,1.2) -- (1.2,-2);
	%\node[right] at (1.5,2.4) {$p=r e^{i\phi}$};
	\draw[rotate=-4,thick,magenta] (1.7,0) ellipse (2cm and 1.054cm);
	%\draw[rotate=-3,thick,blue] (1.74,0) ellipse (2cm and 0.986cm);
	\draw[dashed,magenta] (-1.5,0.105) -- (4.5,-0.315);
	\fill (3.17,-0.24)  circle[radius=2pt];
	\node[right,magenta] at (2,-0.5) {$f_n=\rho_ne^{i\phi_n}$};
	\end{tikzpicture}
	\caption{A graphical explanation of Proposition \ref{prop_antipodali}. The ellipses $\mathcal{E}$ and $\mathcal{E}_n$ share one of the foci 
	(the origin), the length of the major axis and they both pass through $p$. While $\mathcal{E}_n$ is converging to $\mathcal{E}$, the intersection point $p_n$ between $\mathcal{E}$ and $\mathcal{E}_n$ converges to the point $p_f$, the antipodal point of $p$ with respect to the second focus $f$ of $\mathcal{E}$.}
	\label{figbifor}
\end{figure}

Before giving the proof, we show below that a sequence of ellipses with the above properties actually exists and we 
obtain their polar equations. Preliminarily, we observe that, by symmetry, we can suppose that $p$ lies in the lower half-plane, that is,
$p = r_0 e^{-i\phi_0}$ with
$\phi_0 \in [0,\pi]$.

Suppose first that $\mathcal{E}$ is not a circumference and, to fix the ideas, that $x_f > 0$; then, the polar equation of $\mathcal{E}$ reads as
\begin{equation}\label{eqellpartenza}
r=\dfrac{0.5(1-e^2)}{1-e\cos \phi},\qquad \phi \in [0,2\pi], 
\end{equation}
where (recalling that the eccentricity of an ellipse is the ratio between the focal distance and the length of the major axis) $e = x_f$.
%A simple computation shows that
%$$
%\cos\theta_0 = \frac{2r_0 - 1 + e^2}{2r_0 e}.
%$$
Writing $f_n = \rho_n e^{i\phi_n}$, since from condition iii) it follows that $\phi_n \to 0$, we infer that for $n$ large enough 
the equation of $\mathcal{E}_n$ writes as
\begin{equation}\label{eqelln1}
r=\dfrac{0.5(1-e_n^2)}{1-e_n\cos (\phi + \phi_n)},\qquad \phi \in [0,2\pi],
\end{equation}
where, similarly as above, $e_n = \rho_n$.
By imposing $p \in \mathcal{E}_n$, we obtain the relation
$$
\cos(-\phi_0 + \phi_n) = \frac{2r_0 - 1 + e_n^2}{2 r_0 e_n},
$$
implying, again since $\phi_n \to 0$, 
\begin{equation}\label{thetan1}
\begin{cases}
\vspace{0.20cm}
\phi_n = \phi_0 - \arccos \frac{2r_0 - 1 + e_n^2}{2 r_0 e_n} & \mbox{ if } \phi_0 \notin \{0,\pi\} \\
\vspace{0.20cm}
\vert \phi_n \vert = \arccos \frac{2r_0 - 1 + e_n^2}{2 r_0 e_n} & \mbox{ if } \phi_0 = 0 \\
\vert \phi_n \vert = \arccos\left(-\frac{2r_0 - 1 + e_n^2}{2 r_0 e_n}\right) & \mbox{ if } \phi_0 = \pi. \\
\end{cases}
\end{equation}
Summing up, we have obtained that any ellipse of a sequence $\{\mathcal{E}_n\}_n$ satisfying conditions i), ii) and iii)
has polar equation given by \eqref{eqelln1}, where $e_n =\rho_n$, $\phi_n$ is given by \eqref{thetan1}, and $f_n = \rho_n e^{i\phi_n}$. 

Suppose now that $\mathcal{E}$ is a circumference, that is $r_0 = 1/2$. Then the equation for $\mathcal{E}_n$ writes again as \eqref{eqelln1} where, by condition iii), $e_n \to 0$. By imposing $p \in \mathcal{E}_n$ we find 
%either
%\begin{equation}\label{thetan2}
%\theta_n = \theta_0 + \arccos e_n
%\end{equation}
%or
%\begin{equation}\label{thetan3}
%\theta_n = \theta_0 - \arccos e_n.
%\end{equation}
\begin{equation}\label{thetan2}
\phi_n = \phi_0 \pm \arccos e_n
\end{equation}
Notice that in both these cases we do not have $\phi_n \to 0$, differently from \eqref{thetan1}, indeed $\phi_n \to \phi_0 \pm \pi/2$.

With this in mind, we are ready to give the proof.

\begin{proof}[Proof of Proposition \ref{prop_antipodali}]
We first deal with the easier case when $\mathcal{E}$ is a circumference. In this situation, it is readily checked that
the intersection point $p_n = 0.5 e^{i\psi_n}$ between $\mathcal{E}$ and $\mathcal{E}_n$ is given by
{%\color{magenta}
$$
\psi_n = -\phi_n \pm \arccos e_n.
$$ 
Replacing equation \eqref{thetan2} and passing to the limit as $n \to +\infty$ we obtain that $\psi_n \to -\phi_0 \pm \pi$ (the case $\psi_n \to -\phi_0$ gives indeed the point $p$), hence $\phi_n$ in \eqref{thetan2}. Since $e_n \to 0$, 
$p_n$ converges to the antipodal point of $p$, as desired.
}

Suppose now that $\mathcal{E}$ is not a circumference. Again, we assume $x_f > 0$ (and $p$ lying in the lower half-plane).
Moreover, we also suppose that the eccentricity $e$ of $\mathcal{E}$ is given by
$$
e = r_0 \cos\phi_0 + \sqrt{r_0^2 \cos^2 \phi_0 + 1 - 2 r_0},
$$
(the case when $e = r_0 \cos\phi_0 - \sqrt{r_0^2 \cos^2 \phi_0 + 1 - 2 r_0}$ can be treated in the same way); accordingly, we have
$$
e_n = r_0 \cos (-\phi_0+\phi_n) + \sqrt{r_0^2 \cos^2 (-\phi_0+\phi_n) + 1 - 2 r_0}.
$$
Let us remark that, since $x_f=e$, the choice done on $e$ implies that $x_f$ is greater than the first component of $p$, hence the first component of $p_f$ is greater than $x_f$ and in particular is positive (see Figure \ref{figbifor}).

For the sake of readability, we split our arguments in three steps.
\smallbreak
\noindent{\emph{1. Finding the point $p_n = r_n e^{i\psi_n}$}.} This is easily done, solving the system made by equations 
\eqref{eqellpartenza} and \eqref{eqelln1}; we obtain that $\psi_n$ is the angle satisfying 
\begin{equation}\label{cossin}
\begin{cases}
\vspace{0.20cm}
\displaystyle{\cos\psi_n = \frac{-A_n C_n + \vert B_n \vert\sqrt{A_n^2 + B_n^2 - C_n^2}}{A_n^2 + B_n^2}}  \\
\displaystyle{\sin\psi_n = \frac{-A_n \vert B_n \vert \sqrt{A_n^2 + B_n^2 - C_n^2} - B_n^2 C_n}{B_n(A_n^2 + B_n^2)}}
\end{cases}
\end{equation}
where
$$
A_n = e(1-e_n^2) - e_n (1-e^2)\cos\phi_n, \qquad B_n = e_n(1-e^2)\sin\phi_n, \qquad C_n = e_n^2 - e^2.
$$
Of course, $r_n$ can be obtained from 
equations \eqref{eqellpartenza} or \eqref{eqelln1}.
\smallbreak
\noindent{\emph{2. Finding the point $p_* = \lim_{n \to +\infty}p_n$.}} By Taylor's expansions of $\cos \phi_n$ as $\phi_n \to 0$, we find
$$
e_n = e + (r_0\sin \phi_0 -E)\phi_n +F\phi_n^2+O(\phi_n^3)
$$
hence, as $n \to +\infty$, 
$$
\begin{cases}
\vspace{0.2cm}
A_n = (1+e^2)(E-r_0\sin\phi_0)\phi_n + \left( \frac12 e (1-e^2) - (1+e^2) F - e (E-r_0\sin\phi_0)^2 \right)\phi_n^2 +  
O(\phi_n^3) \\
\vspace{0.2cm}
B_n = e(1-e^2)\phi_n - (1-e^2)(E - r_0\sin\phi_0)\phi_n^2 + O(\phi_n^3), \\
C_n = -2e (E-r_0\sin\phi_0)\phi_n + \left((E-r_0\sin\phi_0)^2 + 2eF\right)\phi_n^2 + O(\phi_n^3), 
\end{cases}
$$
where
$$
E = -\frac{r_0^2\cos\phi_0\sin\phi_0}{\sqrt{\Delta}}, \quad 
F = \frac12 \left(- r_0\cos\phi_0 +  \frac{r_0^2 (\sin^2\phi_0 - \cos^2\phi_0)}{\sqrt{\Delta}} -  \frac{r_0^4 \cos^2\phi_0 \sin^2\phi_0}{\Delta^{3/2}}\right),
$$
with
$$
\Delta = r_0^2 \cos^2 \phi_0 + 1 - 2 r_0.
$$
Let us remark that, as discussed in the proof of Theorem \ref{teo-ex}, $\Delta \geq 0$.

Passing to the limit in \eqref{cossin}, we finally get $p_* = r_* e^{i\psi_*}$, where
$\psi_*$ is the angle satisfying
$$
\begin{cases}
\vspace{0.2cm}
\displaystyle{\cos\psi_* = \frac{U + V \sqrt{\Delta}}{Z}}  \\
\displaystyle{\sin\psi_* = -(1-e^2)r_0\sin\phi_0 \frac{-(1-r_0)(1+e^2)+2e\sqrt{\Delta}}{(1+e^2)^2 r_0^2 \sin^2\phi_0 + (1-e^2)\Delta}} \\ \vspace{.2cm}
\displaystyle{\qquad \quad = -\frac{r_0\sin\phi_0}{(1-e^2)Z\sqrt{\Delta}} \left[-(1+e^2)(U+V\sqrt{\Delta})+2eZ\right]},
\end{cases}
$$
where
$$
U = 2e(1+e^2)r_0^2 \sin^2\phi_0, \quad V = (1-r_0)(1-e^2)^2, \quad Z = (1+e^2) r_0^2 \sin^2\phi_0 + (1-e^2)^2 \Delta.
$$
Moreover,
$$
r_* = \frac{0.5(1-e^2)Z}{Z-e(U+V\sqrt{\Delta})}.
$$
\smallbreak
\noindent{\emph{3. Proving that $p_* = p_f$.}} In order to do this, we compute
the slopes $m$ and $m_*$ of the lines through $p$ and $f$ and through $p_*$ and $f$, respectively,  and show that they are equal.
Recalling that $x_f = e$, we find
$$
m = \frac{r_0\sin\phi_0}{\sqrt{\Delta}} \quad \mbox{ and } \quad m_* = \frac{-r_0\sin\phi_0 \left( 2eZ - (1+e^2)(U + V \sqrt{\Delta})\right)}{(1-e^2)(U+V\sqrt{\Delta}) - 2 e Z + 2e^2 (U+V\sqrt{\Delta})}\frac{1}{\sqrt{\Delta}}
$$
and the conclusion follows by simple computations.
\end{proof}

\subsection{From antipodal points to bifurcation points}\label{sec3.2}

We now show an interpretation of Proposition \ref{prop_antipodali} in terms of geodesic bifurcation.
From now on, $\gamma_*: [0,1] \to \mathcal{H}$ will be a geodesic parameterization of a Keplerian ellipse of energy $h$, 
with $\gamma_*(0) = \gamma_*(1) = p$ and $\gamma_*(s) \neq p$ for any $s \in \mathopen{]}0,1\mathclose{[}$ (notice that we are not precising if it is covered in the clockwise/counter-clockwise sense).

\begin{proposition}\label{prop_biforcazione}
The only bifurcation point along $\gamma_*$ is the point $p_f$ introduced in Definition \ref{def_antipodali}.
\end{proposition}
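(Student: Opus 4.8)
The plan is to read Proposition~\ref{prop_biforcazione} essentially off Proposition~\ref{prop_antipodali}. Recall from the Appendix (following \cite{PicPorTau04}) that, for the one–parameter family of geodesic problems ``join $p=\gamma_*(0)$ to $\gamma_*(r)$'' with reference solutions $\gamma_*|_{[0,r]}$, the point $\gamma_*(s_0)$ with $s_0\in\,]0,1[\,$ (so that $\gamma_*(s_0)\neq p$ by the standing assumption on $\gamma_*$) is a bifurcation point exactly when, for $r$ arbitrarily close to $s_0$, the geodesic $\gamma_*|_{[0,r]}$ fails to be locally unique among geodesics close to $\gamma_*|_{[0,s_0]}$; and that this happens if and only if $\gamma_*(s_0)$ is conjugate to $p$ along $\gamma_*$ (here the nontrivial implication is unconditional, since $\dim\mathcal{H}=2$ forces all conjugate points to be simple). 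By Remark~\ref{rem_geoellisse} and Proposition~\ref{prop_maup}, geodesics issuing from $p$ are precisely the arcs based at $p$ of Keplerian ellipses with one focus at the origin and major axis of length $1$, reparameterized via \eqref{cambiost}. I will show directly that the set of bifurcation points is exactly $\{p_f\}$, by two complementary arguments.

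First I would check that $p_f$ is a bifurcation point. Let $\mathcal{E}$ be the ellipse underlying $\gamma_*$ and $f$ its second focus, so $p_f=\gamma_*(s_f)$ for a unique $s_f\in\,]0,1[\,$. Choose, as in the construction preceding the proof of Proposition~\ref{prop_antipodali} (polar equations \eqref{eqelln1}, \eqref{thetan1}, \eqref{thetan2}), a sequence $\{\mathcal{E}_n\}_n$ of Keplerian ellipses with one focus at the origin, major axis $1$, passing through $p$, with $\mathcal{E}_n\neq\mathcal{E}$ and second focus $f_n\to f$. By Proposition~\ref{prop_antipodali}, $\mathcal{E}_n\cap\mathcal{E}=\{p,p_n\}$ with $p_n\neq p$ and $p_n\to p_f$; in particular $p_n\in\mathcal{E}$, say $p_n=\gamma_*(r_n)$ with $r_n\to s_f$. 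Let $\gamma_n$ be the geodesic reparameterization of the arc of $\mathcal{E}_n$ joining $p$ to $p_n$ on the side along which $\gamma_*|_{[0,s_f]}$ runs from $p$ to $p_f$. Then $\gamma_n$ is a geodesic from $p$ to $\gamma_*(r_n)$, distinct from $\gamma_*|_{[0,r_n]}$ (the two arcs lie on different ellipses), and $\gamma_n\to\gamma_*|_{[0,s_f]}$ because $\mathcal{E}_n\to\mathcal{E}$ and $p_n\to p_f$ (the reparameterizations \eqref{cambiost} converging as well). Thus $\gamma_*|_{[0,r]}$ is not locally unique near $\gamma_*|_{[0,s_f]}$ for $r=r_n\to s_f$, so $p_f$ is a bifurcation point (and hence conjugate to $p$).

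Next, suppose $\gamma_*(s_0)$, $s_0\in\,]0,1[\,$, is a bifurcation point, and let $\gamma_n$ be a corresponding sequence of geodesics from $p$ to $\gamma_*(r_n)$, with $r_n\to s_0$, $\gamma_n\neq\gamma_*|_{[0,r_n]}$ and $\gamma_n\to\gamma_*|_{[0,s_0]}$. Each $\gamma_n$ is an arc based at $p$ of a Keplerian ellipse $\mathcal{E}_n$ (one focus at the origin, major axis $1$) passing through $p$ and through $\gamma_*(r_n)$. Since $\gamma_n\to\gamma_*|_{[0,s_0]}$, a nondegenerate arc of $\mathcal{E}$, we get $\mathcal{E}_n\to\mathcal{E}$; moreover $\mathcal{E}_n\neq\mathcal{E}$ for $n$ large, for the only arc of $\mathcal{E}$ from $p$ to $\gamma_*(r_n)$ other than $\gamma_*|_{[0,r_n]}$ is the complementary one (essentially $\gamma_*|_{[r_n,1]}$ reversed), which converges to $\gamma_*|_{[s_0,1]}$ reversed, not to $\gamma_*|_{[0,s_0]}$. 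Hence $\{\mathcal{E}_n\}_n$ is a sequence of the type covered by Proposition~\ref{prop_antipodali}, so $\mathcal{E}_n\cap\mathcal{E}=\{p,p_n\}$ with $p_n\to p_f$. But $\gamma_*(r_n)\in\mathcal{E}_n\cap\mathcal{E}=\{p,p_n\}$ and $\gamma_*(r_n)\neq p$ for $n$ large, so $\gamma_*(r_n)=p_n\to p_f$, whence $\gamma_*(s_0)=\lim_n\gamma_*(r_n)=p_f$. This proves that $p_f$ is the only bifurcation point along $\gamma_*$.

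The steps that need care are: the precise form, imported from the Appendix, of the equivalence between bifurcation points and conjugate points (and the simplicity of conjugate points in dimension two, which removes the parity hypothesis from the nontrivial implication); the fact that a bifurcating sequence can be taken with endpoints $\gamma_*(r_n)$ lying on $\gamma_*$ itself, together with the implications $\gamma_n\to\gamma_*|_{[0,s_0]}\;\Rightarrow\;\mathcal{E}_n\to\mathcal{E}$ and $\mathcal{E}_n\neq\mathcal{E}$ eventually, which are what make Proposition~\ref{prop_antipodali} applicable; and the convergence of the Maupertuis reparameterizations \eqref{cambiost}. All of these are routine once the nondegeneracy of the limiting arc $\gamma_*|_{[0,s_0]}$ is exploited; the genuinely new input, and the conceptual crux, is the identification of the limit of the coalescing ellipses carried out in Proposition~\ref{prop_antipodali}, which pins the limiting endpoint to the antipodal point $p_f$.
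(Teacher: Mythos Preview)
Your proposal is correct and follows the same two--part strategy as the paper: use Proposition~\ref{prop_antipodali} to exhibit a bifurcating sequence at $p_f$, then invoke it again to rule out any other bifurcation point. The one place where the paper does noticeably more work is the verification that the constructed $\gamma_n$ actually satisfy condition~iii) of Definition~\ref{def_bifo}, namely $\dot\gamma_n(0)\to\dot\gamma_*(0)$. You handle this by asserting that the Maupertuis reparameterizations~\eqref{cambiost} converge; the paper instead parameterizes each $\gamma_n$ on $[0,1]$ so that $\gamma_n(s_n)=\gamma_*(s_n)$, observes geometrically that $\dot\gamma_n(0)\to\lambda\dot\gamma_*(0)$ for some $\lambda\ge 0$, and then pins down $\lambda=1$ via the Rescaling Lemma (Proposition~\ref{prop_geo}~iii)) and continuous dependence, by passing to the limit in $\gamma_n(s_n)=p_n\to p_f=\gamma_*(s^*)$. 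This is exactly the ``care'' you flag in your last paragraph, and it is worth spelling out, since your phrasing of bifurcation as ``local non--uniqueness of $\gamma_*|_{[0,r]}$'' is not literally Definition~\ref{def_bifo}; matching the paper's definition requires fixing the parameter interval and checking the velocity convergence explicitly. Your aside about simplicity of conjugate points in dimension two is correct but not needed here, since Theorem~\ref{prop_bifo_astratta} is already stated as an unconditional equivalence.
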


\begin{proof}
We divide the proof in two parts.
\smallbreak
We first prove that $p_f$ is a bifurcation point. Let $\mathcal{E} = \gamma_*([0,1])$: by Remark \ref{rem_geoellisse}, $\mathcal{E}$ is an ellipse with focus at the origin and major axis of length $1$. Up to a rotation, we can assume that the second focus $f$ is of the form $(x_f,0)$ for some $x_f \in \mathbb{R}$.

Let us now construct a sequence of ellipses 
$\{\mathcal{E}_n\}_n$ as in Proposition \ref{prop_antipodali}. By that result, the ellipses $\mathcal{E}$ and $\mathcal{E}_n$ intersect in a unique point different from $p$, say $p_n$, satisfying $p_n \to p_f$ as $n \to +\infty$.
Set now $s^* = \gamma^{-1}_*(p_f)$ and $s_n = \gamma^{-1}_*(p_n)$ and, according to Remark \ref{rem_geoellisse} and the second property in Proposition \ref{prop_geo}, let $\gamma_n: [0,1] \to \mathcal{H}$ be a geodesic parameterization 
of (an arc of) $\mathcal{E}_n$ such that
$$
\gamma_n(0) = p, \qquad \gamma_n(s_n) = p_n, \qquad \langle \dot\gamma_n(0),\dot\gamma_*(0) \rangle \geq 0.
$$
We claim that the above $\gamma_n$ satisfy the conditions in Proposition \ref{prop_bifo_astratta} for the geodesic bifurcation.
Of course, $\gamma_n \neq \gamma$ and $s_n \to s^*$; moreover, by construction conditions i) and ii) of Proposition \ref{prop_bifo_astratta}
are satisfied. As far as iii) is concerned, by geometrical considerations we easily infer that there exists $\lambda \geq 0$
$$
\dot \gamma_n(0) \to  \lambda \dot \gamma_*(0).
$$
We thus need to show that $\lambda = 1$.

By the continuous dependence theorem for ODEs, we deduce that $\gamma_n \to \tilde \gamma$, uniformly on $[0,1]$, where
$\tilde \gamma$ is the unique geodesic with $\tilde\gamma(0) = p$ and $\dot{\tilde\gamma}(0) = \lambda \dot \gamma_*(0)$.
Thus, either $\tilde \gamma$ is constantly equal to $p$ (if $\lambda = 0$) or, by the Rescaling Lemma in Proposition \ref{prop_geo}, 
$\tilde\gamma(s) = \gamma_*(\lambda s)$ for every $s$ (if $\lambda > 0$).
Passing to the limit in $\gamma_n(s_n) = p_n$, using the fact that $p_n \to p_f$ in view of Proposition \ref{prop_antipodali},
we get $\tilde \gamma(s^*) = p_f$. This of course excludes that $\lambda = 0$; hence $\tilde\gamma(s^*) = p_f = \gamma_*(\lambda s^*)$,
implying $\lambda = 1$ since $p_f = \gamma_*(s^*)$.

\smallbreak
We now show that $p_f$ is the only bifurcation point. By contradiction, let us assume that $p' \neq p_f$ is another bifurcation point along $\gamma$ and let $\{\gamma_n\}_n$ be a family of bifurcation geodesics according to Definition \ref{def_bifo}. According to Remark \ref{rem_geoellisse}, $\gamma_n([0,1])$ are arcs of ellipses with one focus at the origin and major axis of length $1$; moreover, since $\gamma_n \to \gamma$ it is easily checked that such ellipses are of the type described in Proposition \ref{prop_antipodali}. By the notion of geodesic bifurcation, we further know that $\gamma_n([0,1])$ intersect $\gamma([0,1])$ in a point different from $p$ and converging to $p'$. Then, Proposition \ref{prop_antipodali} finally implies $p' = p_f$.
\end{proof}

\subsection{Proof of Theorem \ref{teo-var}}

As in the proof of Theorem \ref{teo-ex}, we assume (by rotational invariance) that $p$ and $q$ are symmetric with respect to the $x$-axis and that the point $p$ belongs to the lower half-plane.

Let us denote by $x$ a solution of \eqref{eq-main} and recall that the local minimality/non-minimality of $x$ is meant as the local minimality/non-miminality of the corresponding geodesic $\gamma$ as a critical point of the energy functional $E$. 
Denoting by $\mu(\gamma)$ the Morse index of $\gamma$, by Taylor's formula the following hold true:
\begin{itemize}
\item if $\mu(\gamma) \neq 0$, then $\gamma$ is not a local minimizer of $E$,
\item if $\mu(\gamma) = 0$ and $\gamma$ is non-degenerate (that is, if $\gamma(1)$ is not conjugate to $\gamma(0) = p$ along $\gamma$), then
$\gamma$ is a local minimizer of $E$.
\end{itemize}
In view of the Morse index theorem (Theorem \ref{teo_morse}) we thus have:
\begin{itemize}
\item if there exists $s^* \in \mathopen{]0},1\mathclose{[}$ such that $\gamma(s^*)$ is conjugate to $p$ along $\gamma$, then $\gamma$ is not a local minimizer of $E$,
\item if for any $s \in \mathopen{]}0,1\mathclose{]}$ the point $\gamma(s)$ is not conjugate to $p$ along $\gamma$,
then $\gamma$ is a (non-degenerate) local minimizer of $E$.
\end{itemize}
We thus need to count conjugate points along $\gamma$. To this end, we regard the Keplerian arc $\gamma$ as part of the full Keplerian ellipse; that is, we consider the geodesic $\gamma_*$ defined at the beginning of the previous Section \ref{sec3.2}: of course, we take the parameterization in the clockwise/counter-clockwise direction according to the one of the arc $x$ we are dealing with. 
Needless to say, points of (the image of) $\gamma$ are conjugate to $p$ along $\gamma$ if and only if they are conjugate to $p$ along $\gamma_*$.

By Theorem \ref{prop_bifo_astratta}, any point of the Keplerian ellipse (parameterized by) $\gamma_*$ different from $p$ itself is conjugate to $p$ if and only if it is a bifurcation point along $\gamma_*$. Using Proposition \ref{prop_biforcazione}, we thus know that the only point 
(different from $p$) conjugate to $p$ along the ellipse is the antipodal point $p_f$. Let us also notice that $q \neq p_f$, since we have excluded the case $\vert p \vert = 1/2$ and $\phi_0 = \pi/2$ (see Remark \ref{rem_casodegenere}). As a consequence of this discussion, we have that $x$ is a local minimizer if and only if $p_f \notin x([0,1])$.

To understand when this happens, we assume without loss of generality that $\phi_0 < \pi/2$; then, by simple geometrical considerations, the following holds: denoted by $f = (x_f,0)$ the second focus of the Keplerian ellipse containing the Keplerian arc $x$, if $x_f < \vert p \vert \cos \phi_0$, then $p_f \in x([0,1])$ if and only if $c_x < 0$;  
if $x_f > \vert p \vert \cos \phi_0$, then $p_f \in x([0,1])$ if and only if $c_x > 0$. Since the eccentricity of an ellipse is the ratio between the focal distance and the length of the major axis, $\vert x_f \vert = e$, and going back to the proof of Theorem \ref{teo-ex}, we have the following:
\begin{enumerate}
\item Case $|p|=|q|<1/2$. If $x$ belongs to the ellipse with focus on the negative $x$-semiaxis, then $x$ is a local minimizer if and only if it satisfies $c_x > 0$, that is if and only if $x = x_{\mathrm{int}}^+$; if $x$ belongs to the ellipse with focus on the positive $x$-semiaxis, then
by \eqref{eq-eccelldentro2} it holds $x_f = e > \vert p \vert \cos \phi_0$ and then $x$ is a local minimizer if and only if it satisfies $c_x < 0$, that is if and only if $x = x_{\mathrm{int}}^-$.
\item Case $|p|=|q|=1/2$. If $x$ belongs to the circular orbit, we easily conclude. Otherwise, $x$ belongs to an ellipse with focus on the positive $x$-semiaxis and, by \eqref{eq-ellissecirc1}, $x_f = e = \cos\phi_0 > \vert p \vert \cos\phi_0$; again, we infer that $x$ is a local minimizer if and only if $c_x < 0$, that is if and only if $x = x_{\mathrm{int}}$. 
\item Case $1/2<|p|=|q|<1$. Of course, since we are assuming $\phi_0 < \pi/2$, we have $\phi_0 < \arcsin \frac{1-|p|}{|p|}$, as well.
Then, by \eqref{eq-ellissefuoricirc2}, $x$ belongs to an ellipse with focus on the positive $x$-semiaxis: one of this ellipse
satisfies $x_f = e > \vert p \vert \cos \phi_0$, while the other one satisfies $x_f = e < \vert p \vert \cos \phi_0$.
Thus, $x$ is a local minimizer if and only if $c_x < 0$ in the first case and $c_x > 0$ in the second one. Since the ellipse with greater eccentricity is the one with farthest perihelion, the solution $x$ is a local minimizer if and only if $x= x_{\mathrm{ext},1}^+$ or $x = x_{\mathrm{int},2}^-$. 
\end{enumerate}

\begin{remark}\label{indici_esatti}
Let us observe that the above arguments do not allow us to compute the Morse index of an arc $x$ which is not locally minimal: indeed, we have shown that there is only one conjugate point along $x$, but we do not have information about its multiplicity. We conjecture, however, that the Morse index of all these arcs should be exactly equal to $1$. 
\end{remark}

\begin{remark}
It is interesting to observe that variational characterization provided by Theorem \ref{teo-var} is no longer valid if the Kepler equation 
is considered in the three-dimensional space. Indeed, a further geodesic bifurcation can be produced, leading to new conjugate points.
Precisely, by rotating a Keplerian arc around the line passing through the origin and the point $p$, it is immediately checked that $p'$, the symmetric point of $p$ with respect to the origin, is a bifurcation point (hence, a conjugate point) as well.
Hence, for the Kepler equation in $\mathbb{R}^3$, only one of the solutions of \eqref{eq-main} is locally minimal.  
We omit the precise statement of the result, for briefness. 
\end{remark}

\section{Appendix: a brief recap on geodesics and their Morse index}\label{sec 4}

In this section, we collect some known facts about geodesics and conjugate points. 
We will work in the simplified setting when the manifold is an open set of the Euclidean space, endowed with a Riemannian metric conformal to the standard one. For a more general treatment, we refer to \cite{Kli95}.

\subsection{Definition and first properties}

Let $M$ be an open subset of $\mathbb{R}^N$; as such, $M$ has a natural structure of differentiable manifold of dimension $N$. 
Given a smooth function $W: M \to \mathbb{R}$ satisfying $W(x) > 0$ for every $x \in M$, we can endow $M$ with the Riemannian structure
given by 
$$
g_W(x)[u,v] = W(x) \langle u,v \rangle, \qquad x \in M, \,u,v \in \mathbb{R}^N,
$$
where $\langle \cdot, \cdot \rangle$ denotes the Euclidean product on $\mathbb{R}^N$. Notice that smooth curves on $M$ are nothing but curves with values in $M$ which are smooth in the usual sense; however, the velocity and the length of a curve are influenced by the function $W$.  

A \emph{geodesic} on the Riemannian manifold $(M,g_W)$ is a smooth curve
$\gamma: I \to M$ (with $I \subset \mathbb{R}$ an interval) solving the equation
\begin{equation}\label{eq_geo}
\frac{d}{ds} \left( W(\gamma) \dot \gamma \right) = \frac12 \vert \dot \gamma \vert^2 \nabla W(\gamma), 
\end{equation}
where $\nabla W$ is the gradient of the function $W$ with respect to the standard Euclidean structure. 
By the standard theory of ODEs, for any $p \in M$ and $v \in \mathbb{R}^N$, there exists a unique (maximal) geodesic $\gamma_{p,v}$ 
satisfying $\gamma_{p,v}(0) = p$ and $\dot \gamma_{p,v}(0) = v$.

We collect in the next proposition some simple properties which can be readily obtained from this definition.

\begin{proposition}\label{prop_geo}
	The following facts hold true:
	\begin{itemize}
		\item[i)] Geodesics have constant velocity: namely, for any geodesic $\gamma$, there exists $c \in \mathbb{R}$ such that $\vert \dot \gamma(s) \vert \sqrt{W(\gamma(s))} \equiv c$;
		\item[ii)] For every geodesic $\gamma$ and for every $a,b \in \mathbb{R}$ with $a \neq 0$, the curve
		$\tilde\gamma(s) = \gamma(as + b)$ is a geodesic, as well;
		\item[iii)] (Rescaling Lemma) For any $\lambda \in \mathbb{R}$ with $\lambda \neq 0$, it holds that $\gamma_{p,\lambda v}(s) = \gamma_{p,v}(\lambda s)$ for any $s \in \mathbb{R}$ such that both sides of the equation are defined.
	\end{itemize}
\end{proposition}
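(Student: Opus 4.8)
The plan is to verify each of the three items by a direct manipulation of the geodesic equation \eqref{eq_geo}; nothing beyond the chain rule and the uniqueness of solutions of \eqref{eq_geo} (already invoked in the paragraph preceding the statement) is needed.

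For item i) I would consider the function $s \mapsto W(\gamma(s))\,\vert\dot\gamma(s)\vert^2$ and show that its derivative vanishes identically. Differentiating produces the two terms $\langle\nabla W(\gamma),\dot\gamma\rangle\,\vert\dot\gamma\vert^2$ and $2\,W(\gamma)\langle\dot\gamma,\ddot\gamma\rangle$. Rewriting \eqref{eq_geo} in the form $W(\gamma)\ddot\gamma = \frac{1}{2}\vert\dot\gamma\vert^2\nabla W(\gamma) - \langle\nabla W(\gamma),\dot\gamma\rangle\dot\gamma$ and pairing both sides with $\dot\gamma$ gives $2\,W(\gamma)\langle\dot\gamma,\ddot\gamma\rangle = -\vert\dot\gamma\vert^2\langle\nabla W(\gamma),\dot\gamma\rangle$, so the two contributions to the derivative cancel. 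Hence $W(\gamma(s))\,\vert\dot\gamma(s)\vert^2$ is constant; since $W > 0$ this constant is some $c^2$ with $c \geq 0$, and taking square roots yields $\vert\dot\gamma(s)\vert\sqrt{W(\gamma(s))} \equiv c$.

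Item ii) is a plain substitution: plugging $\tilde\gamma(s) = \gamma(as+b)$ into \eqref{eq_geo} and using $\dot{\tilde\gamma}(s) = a\,\dot\gamma(as+b)$, one checks by the chain rule that the left-hand side of \eqref{eq_geo} written for $\tilde\gamma$ equals $a^2$ times the left-hand side of \eqref{eq_geo} for $\gamma$ evaluated at $as+b$, and likewise for the right-hand side; these two expressions agree because $\gamma$ is a geodesic. Item iii) then follows at once: applying ii) with $a = \lambda \neq 0$ and $b = 0$ to the maximal geodesic $\gamma_{p,v}$ produces a geodesic $s \mapsto \gamma_{p,v}(\lambda s)$ whose position at $s = 0$ is $p$ and whose velocity at $s = 0$ is $\lambda v$; by uniqueness of the Cauchy problem for \eqref{eq_geo} it must coincide with $\gamma_{p,\lambda v}$ on the common interval of definition.

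The whole argument is elementary, so there is no genuine obstacle; the only points deserving a moment's care are the sign bookkeeping in the cancellation for i) and noting that in ii) and iii) the reparameterized curve is to be regarded on the appropriately shifted and rescaled interval.
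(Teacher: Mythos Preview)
Your proof is correct and is exactly the kind of direct verification the paper has in mind: the paper does not actually give a proof of Proposition~\ref{prop_geo}, stating only that these properties ``can be readily obtained from this definition'' of the geodesic equation~\eqref{eq_geo}. Your computations for i), the chain-rule check for ii), and the uniqueness argument for iii) fill in precisely those omitted details.
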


In view of this proposition, geodesics are often considered as geometric objects, rather than as parameterized curves. In some textbooks, curves which can be parameterized as geodesics are called pre-geodesics. 

\subsection{The energy functional and the Morse index theorem}

Equation \eqref{eq_geo} can be meant as the Euler-Lagrange equation of the Lagrangian
$L(\gamma,\dot\gamma) = \vert \dot \gamma \vert^2 W(\gamma)$; thus, geodesics can be regarded as critical points of the functional
$E(\gamma) = \int L(\gamma,\dot \gamma) \,ds$. Below, we describe in more details this variational principle, by focusing on the case of geodesics joining two points $p,q \in M$ (a similar discussion can be made when periodic boundary conditions are taken into account, leading to the so-called closed geodesics). 

Let us consider the functional 
$$
E(\gamma) = \int_0^1 \vert \dot \gamma(s)\vert^2 W(\gamma(s))\,ds,
$$
defined on the Hilbert manifold $\Omega_{p,q}$ of $H^1$-paths $\gamma: [0,1] \to M$ satisfying $\gamma(0) = p$ and $\gamma(1) = q$.
It is standard to verify that $E$ is a smooth functional on $\Omega_{p,q}$ with first Frechet differential given by 
$$
dE(\gamma)[\xi] = \int_0^1 \left( 2 \langle \dot\gamma(s),\dot\xi(s)\rangle W(\gamma(s)) + \vert \dot \gamma(s) \vert^2 \langle \nabla W(\gamma(s)),\xi(s)\rangle \right) \,ds,
$$
for any $\xi \in H^1_0([0,1];\mathbb{R}^2)$. A well-known argument based on integration by parts yields:

\begin{proposition}\label{prop_energia}
	A curve $\gamma: [0,1] \to M$ satisfying $\gamma(0) = p$ and $\gamma(1) = q$ is a geodesic if and only if it is a critical point of the functional $E$.
\end{proposition}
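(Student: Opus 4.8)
The plan is to read the statement straight off the first-variation formula for $E$ recalled immediately above, the only genuine ingredients being one integration by parts and, for the nontrivial implication, a short regularity bootstrap. The implication ``$\gamma$ geodesic $\Rightarrow$ $\gamma$ critical point'' is immediate: given $\xi \in H^1_0([0,1];\mathbb{R}^N)$, I integrate by parts the term $\int_0^1 2\langle \dot\gamma,\dot\xi\rangle W(\gamma)\,ds$; since $\xi$ vanishes at the endpoints the boundary contribution drops and
$$
dE(\gamma)[\xi] = \int_0^1 \Big\langle\, -2\tfrac{d}{ds}\!\big(W(\gamma)\dot\gamma\big) + |\dot\gamma|^2\nabla W(\gamma),\ \xi \,\Big\rangle\,ds = 0,
$$
because $\gamma$ solves \eqref{eq_geo}.

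For the converse I would be slightly more careful, since an element of $\Omega_{p,q}$ is a priori only $H^1$ and $W(\gamma)\dot\gamma$ cannot yet be differentiated. I would instead integrate by parts the \emph{other} term of $dE(\gamma)[\xi]$: setting $G(s) = \int_0^s |\dot\gamma(\sigma)|^2\nabla W(\gamma(\sigma))\,d\sigma$ — which is well defined and absolutely continuous, as $|\dot\gamma|^2 \in L^1$ and $\nabla W(\gamma)$ is bounded on the compact set $\gamma([0,1])$ — the vanishing of $dE(\gamma)[\xi]$ for every $\xi \in H^1_0$ rewrites as
$$
\int_0^1 \big\langle\, 2 W(\gamma(s))\dot\gamma(s) - G(s),\ \dot\xi(s)\,\big\rangle\,ds = 0 \qquad \text{for all } \xi \in H^1_0.
$$
By the du Bois-Reymond lemma this forces $2W(\gamma(s))\dot\gamma(s) = G(s) + \text{const}$ for a.e.\ $s$.

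Now the regularity upgrade. Since $W$ is smooth and strictly positive and $G$ is continuous, the right-hand side of $\dot\gamma = (G+\text{const})/(2W(\gamma))$ is continuous; hence $\dot\gamma$ agrees a.e.\ with a continuous function and $\gamma \in C^1$. But then $s \mapsto |\dot\gamma(s)|^2\nabla W(\gamma(s))$ is continuous, so $G \in C^1$, whence $\gamma \in C^2$; iterating, $\gamma$ is smooth. Differentiating the identity $2W(\gamma)\dot\gamma = G + \text{const}$ and using $G' = |\dot\gamma|^2\nabla W(\gamma)$ yields exactly \eqref{eq_geo}, so $\gamma$ is a geodesic.

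The one point requiring a little attention is precisely this last step: one should not apply the naive Euler–Lagrange integration by parts to a mere $H^1$ curve, but route the argument through the du Bois-Reymond lemma and then bootstrap the regularity from the resulting integrated equation. Everything else is a routine computation, and one may alternatively simply cite \cite{Kli95} for the statement.
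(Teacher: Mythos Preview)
Your argument is correct and follows exactly the route the paper indicates: the paper does not give a proof at all, merely stating that ``a well-known argument based on integration by parts yields'' the proposition, and your computation is precisely that argument. Your extra care with the du~Bois-Reymond lemma and the regularity bootstrap for the converse implication is a welcome addition that the paper simply omits.
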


We are now in a position to define the crucial concept of this section. By \emph{Morse index} $\mu(\gamma)$ of a geodesic $\gamma$ we mean its Morse index as a critical point of the functional $E$, that is, the dimension of the maximal subspace $V \subset H^1_0([0,1];\mathbb{R}^2)$ such that $d^2 E(\gamma)[\xi,\xi] < 0$ for any $\xi \in V$. 
Since
\begin{align*}
d^2  E(\gamma)[\xi,\xi] = & 2\int_0^1 \vert \dot\xi(s)\vert^2 W(\gamma(s))\,ds + 4 \int_0^1 \langle \dot\gamma(s),\dot \xi(s) \rangle \langle \nabla W(\gamma(s),\xi(s)\rangle\,ds \\ & + \int_0^1 \vert \dot \gamma(s) \vert^2 \langle \nabla^2 W(\gamma(s))\xi(s),\xi(s)\rangle \,ds
\end{align*}
for any $\xi \in H^1_0([0,1];\mathbb{R}^2)$, functions $\xi$ lying in the kernel of $d^2 E(\gamma)$ are nothing but solutions of the linear equation
\begin{equation}\label{eq-lin}
\frac{d}{ds}\Big(  W(\gamma(s)) \dot \xi  + \langle \nabla W(\gamma(s),\xi\rangle \dot\gamma(s) \Big) = \frac12 \vert \dot \gamma(s) \vert^2 \nabla^2 W(\gamma(s))\xi + \langle \dot\gamma(s),\dot \xi \rangle \nabla W(\gamma(s))
\end{equation}
satisfying the boundary condition $\xi(0) = \xi(1) = 0$. 

The celebrated Morse index theorem asserts that the Morse index $\mu(\gamma)$ equals the number of some special points along $\gamma$, the so-called \emph{conjugate points}. Precisely, we say that the point $\gamma(s^*) = p_{s^*}$, with $s^* \in \mathopen{]}0,1\mathclose{]}$, is conjugate to $\gamma(0) = p$ along $\gamma$ if the linear equation \eqref{eq-lin}
admits nontrivial solutions $\xi$ satisfying $\xi(0) = \xi(s^*) = 0$.
%\begin{equation}\label{eq_con}
%\left\{
%\begin{array}{l}
%\vspace{0.1cm}
%\displaystyle{\frac{d}{dt}\left( \vert \dot \xi \vert^2 W(\gamma(t)) + \langle \dot\gamma(t),\dot \xi \rangle \langle \nabla W(\gamma(t),\xi\rangle\right) = \frac12 \vert \dot \gamma(t) \vert^2 \langle \nabla^2 W(\gamma(t))\xi,\xi \rangle + \langle \dot\gamma(t),\dot \xi \rangle \langle \nabla W(\gamma(t),\xi\rangle }\\ 
%\xi(0) = \xi(t^*) = 0
%\end{array}
%\right.
%\end{equation}
%admits nontrivial solutions (above, $\nabla^2 W$ stands for the Hessian matrix of $W$). 
The multiplicity $m(p_{s^*})$ of the conjugate point is, by definition, the dimension of the space of such solutions.
Incidentally, notice that $\gamma(1) = q$ is conjugate to $\gamma(0)$ along $\gamma$ if and only if the kernel of $d^2 E(\gamma)$ is not trivial (that is, if and only if $\gamma$ is a degenerate critical point of $E$).

With this in mind, the Morse index theorem reads as follows (see for instance \cite{PorWat15} and the references therein).

\begin{theorem}[Morse Index Theorem]\label{teo_morse}
	Let $\gamma: [0,1] \to M$ be a non-constant geodesic. Then the set of conjugate points along $\gamma$ is finite and
	$$
	\mu(\gamma) = \sum_{s^* \in \,\mathopen{]}0,1\mathclose{[}} m(p_{s^*}).
	$$
\end{theorem}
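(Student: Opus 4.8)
The plan is to prove Theorem~\ref{teo_morse} by the classical Morse--Milnor ``sweeping'' argument, adapted to the present conformal setting: one follows the Morse index of the truncated geodesic $\gamma|_{[0,\tau]}$ as the parameter $\tau$ runs over $\mathopen{]}0,1\mathclose{]}$, and shows that this index vanishes for $\tau$ small, is non-decreasing, and increases by exactly the multiplicity of each conjugate point swept over. For $\tau\in\mathopen{]}0,1\mathclose{]}$ let $I_\tau$ be the second variation of $\gamma|_{[0,\tau]}$, i.e.\ the restriction of the Hessian quadratic form appearing before \eqref{eq-lin} to $H^1_0([0,\tau];\mathbb{R}^N)$; let $n(\tau)$ be its Morse index and $\nu(\tau):=\dim\ker I_\tau$ its nullity. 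Inspecting the formula for $d^2E$, $I_\tau$ is represented, with respect to the $L^2([0,\tau];\mathbb{R}^N)$ scalar product, by a self-adjoint Sturm--Liouville-type operator $L_\tau\xi=-\frac{d}{ds}\big(W(\gamma)\dot\xi+\langle\nabla W(\gamma),\xi\rangle\dot\gamma\big)+(\text{lower order})$ with Dirichlet boundary conditions; since on a bounded interval with Dirichlet conditions the first- and zeroth-order terms are relatively compact with respect to $-\frac{d^2}{ds^2}$, the operator $L_\tau$ has compact resolvent, hence discrete spectrum bounded below, consisting of eigenvalues $\lambda_1(\tau)\le\lambda_2(\tau)\le\cdots$, finitely many of which are negative. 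Then $n(\tau)=\#\{k:\lambda_k(\tau)<0\}$ and $\nu(\tau)=\#\{k:\lambda_k(\tau)=0\}$; by the very definition of conjugate point through \eqref{eq-lin}, $\nu(\tau)=m(p_\tau)$ if $\gamma(\tau)$ is conjugate to $p$ along $\gamma$, and $\nu(\tau)=0$ otherwise. In particular $\mu(\gamma)=n(1)<+\infty$.

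Next I would establish monotonicity and the behaviour for small $\tau$. Extending a field from $H^1_0([0,\tau_1];\mathbb{R}^N)$ by zero to $[0,\tau_2]$ preserves both $I$ and the $L^2$-norm, so the minimax characterisation gives $\lambda_k(\tau_2)\le\lambda_k(\tau_1)$ for $\tau_1<\tau_2$; hence every eigenvalue branch $\lambda_k(\cdot)$ is non-increasing and continuous (again by minimax, with rescaled test functions), and $n(\cdot)$ is non-decreasing. Moreover $n(\tau)=0$ for $\tau$ small: for $\xi\in H^1_0([0,\tau];\mathbb{R}^N)$ the Poincar\'e and Sobolev inequalities give $\|\xi\|_{L^2}\le C\tau\|\dot\xi\|_{L^2}$ and $\|\xi\|_{L^\infty}\le C\sqrt\tau\|\dot\xi\|_{L^2}$, and since $|\dot\gamma|$ is bounded (constant speed, Proposition~\ref{prop_geo}(i)) the cross- and zeroth-order contributions to $d^2E$ are $O(\tau)\|\dot\xi\|_{L^2}^2$, dominated for $\tau$ small by the leading term $2\int_0^\tau W(\gamma)|\dot\xi|^2\ge 2(\min_{[0,\tau]}W)\|\dot\xi\|_{L^2}^2>0$; thus $I_\tau$ is positive definite on a short initial arc.

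Then I would analyse the crossing of a conjugate point. Fix $\tau_0\in\mathopen{]}0,1\mathclose{[}$. If $\gamma(\tau_0)$ is not conjugate to $p$, then $\nu(\tau_0)=0$ and, by continuity of the $\lambda_k$, $n$ is constant in a neighbourhood of $\tau_0$. If $\gamma(\tau_0)$ is conjugate with multiplicity $m=m(p_{\tau_0})$, then exactly $m$ of the branches satisfy $\lambda_k(\tau_0)=0$, and the crux is to show that each of them crosses $0$ \emph{strictly decreasingly} at $\tau_0$; granting this, $\nu$ vanishes in a punctured neighbourhood of $\tau_0$, $n(\tau_0)=\lim_{\tau\uparrow\tau_0}n(\tau)$, and $\lim_{\tau\downarrow\tau_0}n(\tau)=n(\tau_0)+m$, i.e.\ $n$ jumps by exactly $m$. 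Since $n(1)=\mu(\gamma)<+\infty$ and $n$ is non-decreasing and integer-valued, it has only finitely many jumps; $n$ is locally constant off the conjugate locus, and each jump is located at a conjugate point and equals its multiplicity, so there are only finitely many conjugate points along $\gamma$ and $\mu(\gamma)=n(1)=\sum_{s^*\in\mathopen{]}0,1\mathclose{[}}m(p_{s^*})$, a possible conjugate point at $s^*=1$ not contributing because $n$ is left-continuous.

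The main obstacle is the strict-crossing claim: that a vanishing eigenvalue of $L_{\tau_0}$ has strictly negative derivative in $\tau$, equivalently that the ``crossing form'' of the family $\{L_\tau\}$ at $\tau_0$ is negative definite on $\ker L_{\tau_0}$. This is precisely the statement that $\tau_0$ is a \emph{non-degenerate} instant of geodesic bifurcation in the sense of \cite{PicPorTau04}; concretely it rests on the fact that a null eigenfunction $\xi_0$ is a genuine non-trivial solution of the second-order system \eqref{eq-lin} with $\xi_0(0)=\xi_0(\tau_0)=0$, so by ODE uniqueness $\dot\xi_0(\tau_0)\neq 0$ and $\xi_0$ cannot vanish on any subinterval, which makes the first variation of the eigenvalue — computed by a Hadamard-type formula after rescaling $[0,\tau]\to[0,1]$ — strictly negative. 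Once this transversality is secured, all the remaining steps above are routine.
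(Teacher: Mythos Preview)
The paper does not actually prove Theorem~\ref{teo_morse}: it is stated in the Appendix as a known result and referenced to \cite{PorWat15} (``see for instance \cite{PorWat15} and the references therein''). So there is no ``paper's own proof'' to compare against; the theorem is simply invoked from the literature.

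Your sketch is the classical Morse--Milnor sweeping argument and is essentially correct. A couple of minor remarks. First, the strict-crossing step needs a little more care when $m(p_{\tau_0})>1$: individual eigenvalue branches of a smooth self-adjoint family need not be differentiable at a multiple eigenvalue, so the statement ``each of them crosses $0$ strictly decreasingly'' is not literally about branch derivatives. What one actually shows---and you do say this---is that the \emph{crossing form} (the quadratic form on $\ker L_{\tau_0}$ obtained by differentiating $I_\tau$ in $\tau$) is negative definite; in the Riemannian case this form evaluates on a null Jacobi field $\xi_0$ to $-W(\gamma(\tau_0))\,|\dot\xi_0(\tau_0)|^2<0$, nonvanishing precisely because $\xi_0(\tau_0)=0$ forces $\dot\xi_0(\tau_0)\neq 0$ by ODE uniqueness for \eqref{eq-lin}. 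That negative-definiteness is what guarantees the index jump equals $m$, independently of how the branches organise themselves. Second, your appeal to \cite{PicPorTau04} for the phrase ``non-degenerate instant of geodesic bifurcation'' is a bit loose: that paper uses a spectral-flow formulation, and what you need here is simply the definiteness of the crossing form, which in the positive-definite (Riemannian) case is automatic. With these clarifications the argument goes through.
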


\begin{remark}\label{rem_paramet}
The choice of parameterizing the geodesic $\gamma$ on the interval $[0,1]$ is completely conventional: any other interval could be used, since geodesics are preserved by affine reparameterizations (recall Proposition \ref{prop_geo}). Of course, conjugate points and the Morse index do not depend on this choice.
\end{remark}

\subsection{Bifurcation of geodesics}

A key role in our arguments will be played by the notion of geodesic bifurcation, as introduced in the paper \cite{PicPorTau04}.

\begin{definition}\label{def_bifo}
	Let $\gamma: [0,1] \to M$ be a non-constant geodesic. A point $\gamma(s^*)$, with $s^* \in \mathopen{]}0,1\mathclose{[}$, is a bifurcation point
	along $\gamma$ if there exists a sequence $\{s_n\}_n \subset [0,1]$ with $s_n \to s^*$ and a sequence of geodesics $\{\gamma_n\}_n$ (defined on $[0,1]$), with $\gamma_n \neq \gamma$, such that
	\begin{itemize}
		\item[i)] $\gamma_n(0) = \gamma(0)$, for every $n$,
		\item[ii)] $\gamma_n(s_n) = \gamma(s_n)$, for every $n$,
		\item[iii)] $\dot \gamma_n(0) \to \dot \gamma(0)$, for $n \to +\infty$.
		%\item[iv)] the vectors $\dot \gamma_n(0)$ and $\dot \gamma(0)$ are not parallel, for every $n$.
	\end{itemize}
\end{definition}

According to the discussion in \cite[p. 122]{PicPorTau04} together with \cite[Corollary 5.6]{PicPorTau04}, the relationship between conjugate points and bifurcation points can be stated as follows.

\begin{theorem}\label{prop_bifo_astratta}
Let $\gamma: [0,1] \to M$ be a non-constant geodesic. Then, a point $\gamma(s^*)$, with $s^* \in \mathopen{]}0,1\mathclose{[}$, is a bifurcation point if and only if it is conjugate to $\gamma(0)$ along $\gamma$.
\end{theorem}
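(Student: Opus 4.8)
The plan is to prove the two implications separately. The implication ``bifurcation $\Rightarrow$ conjugate'' is elementary: write $v_* = \dot\gamma(0)$ and introduce the smooth map $F(s,v) = \gamma_{p,v}(s)$, so that $\gamma(s) = F(s,v_*)$ and $\partial_s F(0,v) = v$. Given a bifurcation sequence as in Definition \ref{def_bifo}, put $v_n = \dot\gamma_n(0)$: then $v_n \to v_*$, $v_n \neq v_*$ (else $\gamma_n = \gamma$ by uniqueness), and $F(s_n,v_n) = \gamma_n(s_n) = \gamma(s_n) = F(s_n,v_*)$ with $s_n \to s^*$. Dividing by $\vert v_n - v_* \vert$ and using the fundamental theorem of calculus,
$$
0 = \int_0^1 \partial_v F\big(s_n,\, v_* + \sigma(v_n - v_*)\big)\big[w_n\big]\,d\sigma, \qquad w_n := \frac{v_n - v_*}{\vert v_n - v_* \vert};
$$
passing to a subsequence with $w_n \to w$, $\vert w \vert = 1$, and letting $n \to +\infty$ yields $\partial_v F(s^*,v_*)[w] = 0$. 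I would then observe that $J(s) := \partial_v F(s,v_*)[w]$, obtained by differentiating the geodesic equation along $\gamma$ in the direction $w$, solves the linearized equation \eqref{eq-lin} with $J(0) = 0$ and $J'(0) = w \neq 0$; being a nontrivial Jacobi field with $J(s^*) = 0$, it shows that $\gamma(s^*)$ is conjugate to $p$.

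For the converse, ``conjugate $\Rightarrow$ bifurcation'', suppose $\gamma(s^*)$ is conjugate to $p$ with multiplicity $m = m(p_{s^*}) \geq 1$. By Proposition \ref{prop_geo}, for $s$ near $s^*$ the rescaled geodesic $\gamma_s(\tau) := \gamma(s\tau)$, $\tau \in [0,1]$, joins $p$ to $\gamma(s)$ and is a critical point, depending smoothly on $s$, of the energy functional $E$ on the Hilbert manifold $\Omega_{p,\gamma(s)}$. By the Morse Index Theorem (Theorem \ref{teo_morse}) the Morse index of $\gamma_s$ equals the number of conjugate points of $\gamma$ in $\mathopen{]}0,s\mathclose{[}$ counted with multiplicity; since conjugate points are isolated, this index is locally constant for $s \neq s^*$, jumps by $m$ across $s^*$, and $\gamma_{s^*}$ is the unique degenerate member of the branch, with $\dim\ker d^2E(\gamma_{s^*}) = m$. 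This is exactly the setting of variational bifurcation --- a smooth one-parameter family of functionals with a smooth branch of critical points along which the Morse index changes --- so the abstract results of \cite{PicPorTau04} (see p.~122 and Corollary~5.6 there) provide $s_n \to s^*$ and critical points $\eta_n \in \Omega_{p,\gamma(s_n)}$ of $E$ with $\eta_n \to \gamma_{s^*}$ in $H^1$, $\eta_n \neq \gamma_{s_n}$. Unwinding the reparameterization via the Rescaling Lemma (Proposition \ref{prop_geo}), each $\eta_n$ is a reparameterization of a geodesic $\gamma_n$ with $\gamma_n(0) = p$, $\gamma_n(s_n) = \gamma(s_n)$, $\gamma_n \neq \gamma$ and $\dot\gamma_n(0) \to \dot\gamma(0)$, which is precisely Definition \ref{def_bifo}.

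The main obstacle is in this last implication, specifically when $m$ is even: a purely topological argument tracking the sign of $\det d(\exp_p)$ --- i.e.\ the local degree of the exponential map near $s^*v_*$ --- would fail, since that degree need not change. The statement survives only because the problem is genuinely variational: the relevant invariant is the Morse index (equivalently, the spectral flow of the Hessians), whose change by $m \geq 1$ forces bifurcation through the gradient structure regardless of parity. Making this precise --- checking that $s \mapsto (E,\Omega_{p,\gamma(s)})$ meets the smoothness, Fredholmness and compactness hypotheses of the abstract theorem --- is the technical heart of \cite{PicPorTau04}, which I would invoke as a black box.
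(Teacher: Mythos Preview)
The paper does not actually supply its own proof of this theorem: it is stated in the Appendix as a quotation of \cite{PicPorTau04} (``According to the discussion in \cite[p.~122]{PicPorTau04} together with \cite[Corollary 5.6]{PicPorTau04}\ldots''), with no argument given. Your proposal therefore goes beyond what the paper provides, and what you sketch is essentially the content of the cited reference. The ``bifurcation $\Rightarrow$ conjugate'' direction via the differential of the exponential-type map $F(s,v)=\gamma_{p,v}(s)$ is the standard elementary argument and is correct as written. For the converse, your reduction to a one-parameter family $s\mapsto\gamma_s$ of critical points whose Morse index jumps by $m$ at $s^*$, followed by an appeal to the spectral-flow/variational-bifurcation machinery of \cite{PicPorTau04}, is exactly the strategy of that paper; you are right that this is where the real work lies (and that a degree argument alone would not suffice for even $m$). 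The only point worth flagging is that the family $\Omega_{p,\gamma(s)}$ has moving boundary conditions, so to apply the abstract theorem literally one first trivializes to a fixed Hilbert space --- this is routine and is handled in \cite{PicPorTau04}, which you already invoke as a black box.
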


%\begin{acknowledgements}\label{ackref}
%he \verb"acknowledgements" environment may be used to acknowledge
%indebtedness to colleagues, host institutions and referees. Accounts
%of grants and financial support should be made as a footnote on the
%title page using the \verb"\extraline{}" command in the preamble.
%\end{acknowledgements}

\end{document}